\numberwithin{equation}{section}
\newtheorem{thm}{Theorem}[section]
\newtheorem{lemma}[thm]{Lemma}
\newtheorem{prop}[thm]{Proposition}
\newtheorem{coro}[thm]{Corollary}
\theoremstyle{definition}
\newtheorem{definition}[thm]{Definition}
\newtheorem{remark}[thm]{Remark}
\newcommand{\Z}{\mathbb{Z}}
\newcommand{\C}{\mathbb{C}}
\newcommand{\ot}{\otimes}
\title[Soergel bimodules for complex reflection groups of rank one]{A Soergel-like category for complex reflection groups of rank one}
\author{Thomas Gobet}
\address{School of Mathematics and Statistics F07, University of Sydney NSW 2006, Australia.}
\email{thomas.gobet@sydney.edu.au}
\author{Anne-Laure Thiel}
\address{Universit\"at Stuttgart, Fachbereich Mathematik, Institut f\"ur Geometrie und Topologie, Pfaffenwaldring 57, 70569 Stuttgart Cedex, Germany.}
\email{anne-laure.thiel@mathematik.uni-stuttgart.de }
\begin{document}

\maketitle

\begin{abstract}
We introduce analogues of Soergel bimodules for complex reflection groups of rank one. We give an explicit parametrization of the indecomposable objects of the resulting category and give a presentation of its split Grothendieck ring by generators and relations. This ring turns out to be an extension of the Hecke algebra of the reflection group $W$ and a free module of rank $|W| (|W|-1)+1$ over the base ring. We also show that it is a generically semisimple algebra if defined over the complex numbers.  
\end{abstract}

\tableofcontents

\date{December 4, 2018}

\section{Introduction}

The aim of this paper is to introduce analogues of Soergel bimodules for complex reflection groups of rank one. 

Finite complex reflection groups are generalizations of finite real reflection groups, also known as finite Coxeter groups. Every (not necessarily finite) Coxeter group $W$ has a faithful linear representation as a group generated by reflections on a real vector space, preserving a symmetric bilinear form.

Given a Coxeter group $W$ with set of simple generators $S$, Soergel \cite{S1}, \cite{S} gave a way to categorify the Iwahori-Hecke algebra of $W$ using a so-called \emph{reflection faithful} representation $V$ of $W$, which is a finite-dimensional faithful reflection representation of $W$ satisfying some properties. The Iwahori-Hecke algebra is then realized as the (split) Grothendieck ring of a category of graded bimodules over the ring $R=\mathcal{O}(V)$ of regular functions on $V$. This allows one to categorify the Kazhdan-Lusztig polynomials~\cite{KL}, which are ubiquitous in the representation theory of Lie theoretic objects and deeply connected to the geometry of Schubert varieties~\cite{KL, KL2}. Soergel bimodules were used to solve important conjectures (such as the non-negativity of the coefficients of these polynomials \cite{EW}); in this framework, the Kazhdan-Lusztig polynomials are interpreted as graded multiplicities in a canonical filtration of the indecomposable Soergel bimodules. 

Soergel bimodules are also of interest outside the Lie theoretic world, as the Artin-Tits group attached to $W$ has a categorical action on the bounded homotopy category of Soergel bimodules, as shown by Rouquier~\cite{Rouq}, \cite{Rouqprep}. This construction can be made for an arbitrary (finitely generated) Coxeter group, and the action had been proven to be faithful if $W$ is finite~\cite{KS, Jensen} (faithfulness is conjectured in general).   

In the case of a finite Weyl group, Soergel bimodules describe the equivariant intersection cohomology of a Schubert variety~\cite{Sperv}. Hence they can be thought of as some kind of extension to arbitrary Coxeter groups of the intersection cohomology of (in general non-existing) flag varieties. 

There have been attempts to generalize several objects associated to finite Weyl groups or Coxeter groups to complex reflection groups. One can cite for instance the 'Spetses' program~\cite{Spetses}, which provides a sort of generalization of unipotent characters of reductive groups to non-existing reductive groups attached to complex reflection groups. In this framework, some categorification results were obtained for cyclic groups in \cite{BoRo} (building up on \cite{EGST1, EGST2}) and later extended in \cite{La1, La2}. Note that the ring considered in \cite[Theorem 5.5]{BoRo} is related to the ring $A_W$ studied in the present paper, as observed in Remark \ref{remBoRo}. Furthermore, the Artin-Tits groups, which as mentioned above are categorified using complexes of Soergel bimodules, also admit nice generalizations to the complex case~\cite{BMR}. For these reasons, it is natural to try to extend Soergel's construction to complex reflection groups. 

This paper proposes the construction of the analogue of a category of Soergel bimodules for finite complex reflection groups of rank one. In the Coxeter group case, the category of Soergel bimodules is a graded category monoidally generated by a family of bimodules $\{B_s\}$ over the graded ring $R=\mathcal{O}(V)$, indexed by the simple reflections $s\in S$. Each bimodule $B_s$ admits both an algebraic definition as the tensor product $R\otimes_{R^s}R[1]$ (here $R^s\subseteq R$ is the graded subring of $s$-invariant functions and $[1]$ denotes a grading shift) and an equivalent definition as a ring of regular functions on a graph. In the complex case, both definitions can be given, but they produce non-isomorphic bimodules if the reflection does not have order $2$. In the rank one case, the algebraic definition leads to a category with only two indecomposable objects, hence its Grothendieck ring is an algebra which is a free module of rank two over the base ring (see Remark~\ref{rmq:FS} below). This does not give a very interesting algebra; in particular, in the case of complex reflection groups, one can still define a Hecke algebra~\cite{BMR}, and while it is not clear, to many extents, that this algebra is the "right" generalization of the Iwahori-Hecke algebra of a Coxeter group, it is natural to search for a Soergel-like category producing a Grothendieck ring which is connected to the Hecke algebras of~\cite{BMR}. For this reason, we choose to work with the geometric definition of the elementary Soergel bimodules as rings of regular functions on reversed graphs (see Section~\ref{sec:bim} below). With this definition, we obtain much more interesting categories, which we study in detail in this paper.

Writing $s$ for a reflection in $\mathbb{C}$ of order $d>2$, $W$ for the cyclic group generated by $s$, and $\mathcal{B}_W$ for the category obtained as the idempotent completion of the additive graded monoidal category generated by the analogue of the Soergel bimodule $B_s$ attached to $s$ (see Section~\ref{sec:bim} for precise definitions), we obtain the following description of the split Grothendieck ring of $\mathcal{B}_W$ (see Theorem~\ref{classif} and Proposition~\ref{pres1})

\begin{thm}[Structure of the split Grothendieck ring of $\mathcal{B}_W$]\label{thm:main}
Let $W, d, \mathcal{B}_W$ be as above. 
\begin{enumerate}
\item The indecomposable objects in $\mathcal{B}_W$ are, up to isomorphism and grading shifts, indexed by the (nonempty) cyclically connected subsets of $W$. In particular, the split Grothendieck ring $A_W:=\langle \mathcal{B}_W \rangle$ is a $\mathbb{Z}[v, v^{-1}]$--algebra which is a free $\mathbb{Z}[v, v^{-1}]$--module of rank $d (d-1)+1$. 
\item The algebra $A_W$ has a presentation with generators $s$, $C_1, \cdots, C_{d-1}$ and relations 
$$\begin{cases}
s^d=1,\\
C_i C_j=C_j C_i~\forall i,j\text{~and~} s C_i= C_i s~\forall i,\\
C_1 C_i=C_{i+1} + s C_{i-1}~\forall i=1,\dots, d-2,\\
C_1 C_{d-1}= \left(v+v^{-1}\right) C_{d-1},\\
s C_{d-1}= C_{d-1},
\end{cases}$$
with the convention that $C_0 := 1$. In particular $A_W$ is commutative, and has a subalgebra isomorphic to the group algebra of $W$. 
\end{enumerate} 
\end{thm}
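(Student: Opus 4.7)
My strategy is to first produce an explicit family of indecomposable objects of $\mathcal{B}_W$ indexed by cyclically connected subsets of $W$, then extract the ring structure of $A_W$ by computing their products. Throughout, the geometric model of $B_s$ as regular functions on a reversed graph (indicated in Section~\ref{sec:bim}) is essential: it is precisely the feature that makes the rank-one story nontrivial when $d > 2$.

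For part (1), I would proceed by induction on the number of tensor factors. First, compute $B_s \otimes_R B_s$ directly from the geometric description and exhibit a decomposition into two summands, one corresponding to the cyclic arc $\{1,s,s^2\}$ and the other to the singleton $\{s\}$ (the bimodule $R$ with right action twisted by $s$). More generally, for each cyclically connected subset $I = \{s^a, s^{a+1},\ldots,s^{a+k}\}$ of $W$, I would construct a candidate bimodule $B_I$ as the coordinate ring of the union of the corresponding translated reversed graphs, and show it arises as a summand of a suitable tensor power of $B_s$. Indecomposability of $B_I$ can be tested by identifying its endomorphism ring as a local ring, most naturally a quotient of $R$ itself. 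The enumeration is then a straightforward combinatorial check: proper cyclic arcs of each size $1 \leq k \leq d-1$ come in $d$ cyclic rotations, giving $d(d-1)$ indecomposables up to shift, and the full set $W$ contributes the one additional indecomposable.

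For part (2), set $C_i := [B_{\{1,s,\ldots,s^i\}}]$ and identify $s \in A_W$ with the class of the singleton bimodule at $s$. The commutativity relations and $s^d = 1$ are immediate from the classification, and together they realize the subalgebra generated by $s$ as $\mathbb{Z}[v,v^{-1}][W]$. The recursion $C_1 C_i = C_{i+1} + sC_{i-1}$ for $i \leq d-2$ comes from an explicit decomposition of the form
$$B_s \otimes_R B_{\{1,s,\ldots,s^i\}} \;\cong\; B_{\{1,s,\ldots,s^{i+1}\}} \;\oplus\; B_{\{s,s^2,\ldots,s^i\}},$$
where the two summands correspond, respectively, to extending the cyclic arc by one element to the right, and to a left-translated copy of the previous arc $\{1,\ldots,s^{i-1}\}$. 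The boundary relations $C_1 C_{d-1} = (v+v^{-1})C_{d-1}$ and $sC_{d-1} = C_{d-1}$ reflect the fact that $B_W$ is the ``closed'' bimodule: it is stable under the $s$-twist (the supporting arc already equals all of $W$), and tensoring with $B_s$ produces two grading-shifted copies of itself, accounting for the quantum integer $v+v^{-1}$.

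Completeness of the presentation then follows by a spanning-set argument. Let $\tilde{A}$ be the abstract $\mathbb{Z}[v,v^{-1}]$-algebra defined by the presentation; the relations verified above yield a canonical surjection $\tilde{A} \twoheadrightarrow A_W$. Using the recursion to eliminate each $C_i$ for $i \geq 2$ in terms of products of $s$ and $C_1$, together with the collapsing relation $sC_{d-1}=C_{d-1}$, one sees that $\tilde{A}$ is spanned by the set $\{s^j : 0 \leq j \leq d-1\} \cup \{s^j C_i : 0 \leq j \leq d-1,\ 1 \leq i \leq d-2\} \cup \{C_{d-1}\}$, of cardinality exactly $d + d(d-2) + 1 = d(d-1)+1$, matching the rank from part (1), so the surjection is an isomorphism. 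The main obstacle is the classification step: showing that the candidate bimodules $B_I$ are indecomposable and that nothing else appears in the idempotent completion requires a careful analysis of the geometric bimodule category, and the transition at $i = d-1$ where the cyclic arc closes up must be treated separately, since it is responsible for the special boundary relation that breaks the naive recursion.
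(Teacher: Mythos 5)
Your proposal follows essentially the same route as the paper: the paper proves indecomposability of each $\mathcal{O}(A)$ by observing it is cyclic with one-dimensional degree-zero component (Remark~\ref{indec}), establishes the key decomposition $\mathcal{O}\bigl(s^{\leq 1}\bigr)\otimes_R \mathcal{O}\bigl(s^{\leq i}\bigr)\cong\mathcal{O}\bigl(s^{\leq i+1}\bigr)\oplus\mathcal{O}\bigl(s^{[1,i]}\bigr)[-2]$ together with the boundary case $\mathcal{O}\bigl(s^{\leq 1}\bigr)\otimes_R\mathcal{O}(W)\cong\mathcal{O}(W)\oplus\mathcal{O}(W)[-2]$ via explicit embeddings (Propositions~\ref{inj1}, \ref{inj2}, \ref{decomp}, Lemma~\ref{lem:FS}), and then closes the presentation by the same spanning-set count you give (Proposition~\ref{pres1}). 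One small caution: the summand $B_{\{s,\dots,s^i\}}$ in your displayed decomposition carries a shift $[-2]$, which is precisely what makes the recursion come out as $C_1C_i=C_{i+1}+sC_{i-1}$ once the normalizing shifts $C_i=[\mathcal{O}\bigl(s^{\leq i}\bigr)[i]]$ are taken into account.
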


Note that the third relation in the above presentation implies that the Grothendieck ring can be generated by the two elements $s$ and $C_1$, while the category $\mathcal{B}_W$ is generated by a single object (whose isomorphism class is $C_1$). The reason for this is that we take the idempotent completion of the category generated by $B_s$. In this situation, whenever $d>2$, there are indecomposable bimodules appearing, which turn out to be invertible. In the Coxeter case, such bimodules can also be defined but (except for the monoidal identity) they do not belong to Soergel's category $\mathcal{B}$; but they are the constituents of the canonical filtrations of the indecomposable objects which categorify the Kazhdan-Lusztig polynomials~\cite{S, EW}. It is not known what the Grothendieck ring of a category generated by Soergel bimodules and these invertible bimodules is in the case of a Coxeter group, except in type $A_1$ (see Remark \ref{remA1}) and in type $A_2$ where it was described by the authors in \cite{GT} and gives rise to an algebra which is a free module of rank $25$ over the base ring. This algebra is also an extension of the Iwahori-Hecke algebra (and a quotient of the affine Hecke algebra of type $\widetilde{A}_2$). Hence, describing such a category is an open problem even for finite Coxeter groups (and even for dihedral groups). On the other hand, the category $\mathcal{B}_W$ could also be considered as the exact analogue of Soergel's category. 

It is natural to study the similarities between the algebra $A_W$ and the Hecke algebra $H_W$ associated to $W$. We write $A_W^{\mathbb{C}}$ for the algebra defined by the same presentation as the one given in Theorem~\ref{thm:main} above, but defined over the complex numbers, with $v\in \mathbb{C}^\times$. Recall that Hecke algebras associated with finite reflection groups are generically semisimple. We show (see Theorem~\ref{thm:semi}) 

\begin{thm}[Semisimplicity]
The algebra $A_W^{{\mathbb{C}}}$ is generically semisimple. More precisely, if $v+v^{-1}\neq 2\cos\bigl(\frac{k \pi}{d}\bigr)$ for all $k=1, \dots, d-1$, then $A_W^{{\mathbb{C}}}$ is semisimple. 
\end{thm}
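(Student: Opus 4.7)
The plan is to exploit the commutativity of $A_W^{\mathbb{C}}$ asserted in Theorem~\ref{thm:main}(2): a finite-dimensional commutative $\mathbb{C}$-algebra is semisimple if and only if it is reduced, i.e.\ isomorphic to a product of copies of $\mathbb{C}$. Since $s$ is central and satisfies $s^d=1$, setting $\zeta:=e^{2i\pi/d}$ and
\[
e_j\;:=\;\frac{1}{d}\sum_{i=0}^{d-1}\zeta^{-ij}s^i,\qquad j=0,\dots,d-1,
\]
produces $d$ orthogonal central idempotents summing to $1$, and hence a decomposition of $\mathbb{C}$-algebras $A_W^{\mathbb{C}}=\bigoplus_{j=0}^{d-1} e_j A_W^{\mathbb{C}}$ in which $s$ acts on $e_j A_W^{\mathbb{C}}$ as the scalar $\zeta^j$. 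It therefore suffices to show that each block is reduced.

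Iterating the relation $C_{i+1}=C_1 C_i - sC_{i-1}$ from $C_0=1$, each $C_i$ equals a polynomial $c_i(C_1,s)\in\mathbb{Z}[C_1,s]$, monic of degree $i$ in $C_1$; a short induction identifies $c_i(x,a)$ with a rescaling of the Chebyshev polynomial of the second kind $U_i$, so that for any nonzero $a\in\mathbb{C}$ and any choice of square root $\sqrt{a}$ one has the factorization $c_i(x,a)=\prod_{k=1}^{i}\bigl(x-2\sqrt{a}\cos(k\pi/(i+1))\bigr)$, whose $i$ roots over $\mathbb{C}$ are pairwise distinct. On the block $e_j A_W^{\mathbb{C}}$ with $j\neq 0$, the relation $sC_{d-1}=C_{d-1}$ combined with $s=\zeta^j\neq 1$ forces $C_{d-1}=0$, so the block is a quotient of $\mathbb{C}[x]/(c_{d-1}(x,\zeta^j))$ via $x\mapsto C_1$, of dimension at most $d-1$. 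On the block $e_0 A_W^{\mathbb{C}}$, where $s=1$, the only remaining non-trivial relation $C_1 C_{d-1}=(v+v^{-1})C_{d-1}$ rewrites as $(x-(v+v^{-1}))\,c_{d-1}(x,1)=0$, so $e_0 A_W^{\mathbb{C}}$ is a quotient of $\mathbb{C}[x]/(R(x))$ with
\[
R(x):=(x-(v+v^{-1}))\prod_{k=1}^{d-1}(x-2\cos(k\pi/d)),
\]
of dimension at most $d$.

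Summing the upper bounds gives $d+(d-1)^2=d(d-1)+1=\dim_{\mathbb{C}} A_W^{\mathbb{C}}$ by Theorem~\ref{thm:main}(1), so all inequalities are equalities and the quotient descriptions are genuine algebra isomorphisms. Each block with $j\neq 0$ is then $\mathbb{C}[x]/(c_{d-1}(x,\zeta^j))$, automatically reduced since $c_{d-1}(x,\zeta^j)$ has simple roots; and $e_0 A_W^{\mathbb{C}}\cong\mathbb{C}[x]/(R(x))$ is reduced precisely when $v+v^{-1}$ is distinct from each Chebyshev zero $2\cos(k\pi/d)$, $k=1,\dots,d-1$, which is exactly the generic hypothesis. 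Under this condition $A_W^{\mathbb{C}}\cong\mathbb{C}^{d(d-1)+1}$ is semisimple. The delicate point is not the Chebyshev computation itself but the verification, via the rank statement of Theorem~\ref{thm:main}(1), that no additional hidden relations are lost in the decomposition into $s$-eigenblocks; this dimension match is what upgrades the \emph{a priori} surjective quotient maps into genuine isomorphisms, and it also makes transparent why failure of the generic condition produces a nilpotent in the $j=0$ block.
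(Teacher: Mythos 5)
Your proof is correct, and at its core it is the same argument as the paper's: decompose $A_W^{\mathbb{C}}$ according to the eigenvalues of $s$, and observe that on each piece the generator $C_1$ is annihilated by a Chebyshev-type polynomial with distinct roots. The paper implements this by viewing the regular module, restricting to the $\eta$-eigenspace $E_s^\eta$, writing the action of $C_1$ there as a tridiagonal matrix, and identifying its characteristic polynomial with $Q_{d-1}(\lambda,\eta)$ via a determinant recursion. You instead cut $A_W^{\mathbb{C}}$ into blocks $e_j A_W^{\mathbb{C}}$ using the central idempotents of $\langle s\rangle$ and present each block directly as a quotient of $\mathbb{C}[x]$ by the polynomial relation forced by $C_{d-1}$; this is cleaner, as it avoids the matrix computation and reduces the conclusion to the elementary fact that a quotient of a reduced finite-dimensional commutative $\mathbb{C}$-algebra is reduced. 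One small inaccuracy: you invoke Theorem~\ref{thm:main}(1) for $\dim_{\mathbb{C}} A_W^{\mathbb{C}}=d(d-1)+1$, but that statement concerns $A_W$ over $\mathbb{Z}[v,v^{-1}]$; the fact that the presentation over $\mathbb{C}$ still gives dimension $d(d-1)+1$ is proved separately in the paper (via an explicit $A_W^{\mathbb{C}}$-module). However, this is not load-bearing for semisimplicity: even without upgrading your surjections $\mathbb{C}[x]/(p_j)\twoheadrightarrow e_j A_W^{\mathbb{C}}$ to isomorphisms, each block is a quotient of a reduced algebra (since $p_j$ has simple roots under the generic hypothesis), hence reduced, and the conclusion follows. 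The dimension count is only needed for your sharper assertion that the quotient maps are isomorphisms and for locating the nilpotent when the condition on $v+v^{-1}$ fails.
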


Note that $\left\{2\cos\left(\frac{k \pi}{d}\right)~|~k=1, \dots, d-1 \right\}$ is the set of roots of a Chebyshev polynomial of the second kind.\\ 
~\\
{\bf{Acknowledgments}}. We thank Pierre-Emmanuel Chaput, Eirini Chavli, Anthony Henderson, Ivan Marin and Ulrich Thiel for useful discussions. The first author was funded by an ARC grant (grant number DP170101579).   

\section{Graded bimodules over polynomial rings and regular functions on graphs}\label{sec:bim}

Let $V$ be a finite-dimensional vector space over a field $k$ of characteristic zero. Let $W$ be a (finitely generated) reflection group, that is, a group generated by finitely many elements $s\in \mathrm{GL}(V)$ such that $s$ has finite order and $H_s:=\ker(s - \mathrm{id}_V)$ is a hyperplane. Note that since $s$ has finite order and $k$ has characteristic zero, the hyperplane $H_s$ has a one-dimensional complement $L$ which is $s$-stable, hence on which $s$ acts by a scalar. 

Let $R=\mathcal{O}(V)$ denote the $k$-algebra of regular functions on $V$. Since $k$ is infinite, we have $\mathcal{O}(V)\cong S(V^*)$, hence $R$ is graded, and inherits an action of $W$ from $V$. Note that $W$ acts degreewise. We adopt the convention that $\mathrm{deg}(V^*)=2$. All the $R$--bimodules which we will consider are graded, with the same operation of the field on both sides. Given an $R$--bimodule, we may talk about the \textit{left} (resp. \textit{right}) action of $R$ to denote the action of $R\otimes_{k} 1$ (resp. the action of $1\otimes_{k} R$), even if each action is both a left and a right action (since $R$ is commutative). All the bimodules which we shall consider are finitely generated as left and right $R$--modules. Note that the category of graded $R$--bimodules satisfies the Krull-Schmidt property, that is, every graded $R$--bimodule is a direct sum of indecomposable bimodules, and the indecomposable summands are unique up to isomorphism and permutation (see~\cite[Remark 1.3]{S}). 

If $k=\mathbb{R}$ and $W$ is finite, then $W$ is a finite Coxeter group. Coxeter groups appear in many different contexts and include the Weyl groups of all semisimple algebraic groups. Conversely, every (not necessarily finite) Coxeter group admits a canonical faithful linear representation as a reflection group, the Tits representation (see~\cite{Bou} or \cite{Humph}). Let $S$ denote the set of generators of $W$ associated to the walls of a chamber and $T=\bigcup_{w\in W} wSw^{-1}$ its set of reflections. Soergel has shown that the split Grothendieck ring of the additive graded monoidal Karoubian category $\mathcal{B}$ generated by the $R$--bimodules $R\otimes_{R^s} R$, $s\in S$, and stable by grading shifts, is isomorphic to the Iwahori-Hecke algebra $H_W$ of $W$ (see~\cite{S1}, \cite{S}). Here $R^s\subseteq R$ denotes the graded subring of $s$-invariant functions. The classes of the (unshifted) indecomposable objects in the split Grothendieck ring coincide with the elements of the Kazhdan-Lusztig basis of $H_W$ (this is Soergel's conjecture, proven in~\cite{EW}) and the bimodule $R\otimes_{R^s}R[1]$ corresponds to the Kazhdan-Lusztig generator $C'_s$. Note that Soergel requires $V$ to be a \textit{reflection faithful} representation of $W$ (\cite[Definition 1.5]{S}), that is, the reflections in $W$ must act by geometric reflections, and distinct reflections have distinct reflecting hyperplanes. If $W$ is finite, one can simply take the Tits representation of $W$. In the case of complex reflection groups, the notion of a reflection faithful representation does not really make sense since every non-trivial power of a reflection is still a reflection with the same hyperplane. We will simply consider a space $V$ on which $W$ acts irreducibly as a reflection group.      

For $x\in W$, let $$\mathrm{Gr}(x)=\{ (xv, v)~|~v\in V\}\subseteq V\times V$$ be the (reversed) graph of $x$. It is a Zariski-closed subset of $V\times V$. Given a subset $A=\{x_1, \dots, x_k\}$ of $W$, we denote by $\mathcal{O}(A)$ or simply $\mathcal{O}(x_1, x_2, \dots, x_k)$ the ring of regular functions on $\bigcup_{i=1}^k \mathrm{Gr}(x_i)$. Note that the two projections on $V$ define a structure of (graded) $R$--bimodule on $\mathcal{O}(A)$. If $W$ is a Coxeter group and $t$ is any reflection in $W$, then 
$$R\otimes_{R^t} R\cong \mathcal{O}(e,t)$$
(see~\cite[Remark 4.3]{S}). 

In this paper, we are interested in the case where $k=\mathbb{C}$, $V$ is one-dimensional and $W$ is finite. The structure of $W$ is very elementary since, in this case, it is cyclic and every element in $W\backslash\{1\}$ is a reflection. All the reflections share the same hyperplane $H_s$, which is reduced to $\{0\}$. Denote by $s$ a generator of $W$ and by $\zeta$ its only eigenvalue distinct from $1$. In fact, since $\mathrm{GL}(\mathbb{C})=\mathbb{C}^{\times}$, we could write $s=\zeta$ but we prefer to distinguish the eigenvalue and the linear transformation, since several statements make sense for other reflection groups.   

Write $W$ as $\bigl\{1, s, s^2, \dots, s^{d-1}\bigr\}$. 

\begin{definition} A subset $A\subseteq W$ is \emph{cyclically connected} if there are $0\leq i \leq d-1$ and $0\leq j \leq d-1$ such that $A=\left\{s^i, s^{i+1}, \dots, s^{i+j}\right\}$.
\end{definition}

For example, if $s$ has order $4$, there are $13$ cyclically connected subsets, namely 
\begin{multline*} 
\{e\}, \{s\}, \big\{s^2\bigr\}, \bigl\{s^3\bigr\}, \{e,s\}, \bigl\{s,s^2\bigr\}, \bigl\{s^2, s^3\bigr\}, \bigl\{s^3,e\bigr\},\\
\bigl\{e, s, s^2\bigr\}, \bigl\{s, s^2, s^3\bigr\}, \bigl\{s^2, s^3, e\bigr\}, \bigl\{s^3, e, s\bigr\}, \bigl\{e, s, s^2, s^3\bigr\} = W.
\end{multline*}
In particular, according to that definition, the empty set is not a cyclically connected subset of $W$, but the whole group $W$ is a cyclically connected subset. Hence there are $d (d-1)+1=(d-1)^2 + d$ such sets.

Note that $R\cong\mathbb{C}[X]$, where $X\in V^*$ is an equation of the hyperplane $\{H_s=0\}$. We have $s\cdot X= \zeta^{-1} X$. Given $0\leq i \leq d-1$ and $0\leq j \leq d-1$ as in the above definition, we write $\mathcal{O}\bigl(s^{[i, i+j]}\bigr)$ for the bimodule $\mathcal{O}(A)$ where $A=\bigl\{s^i, s^{i+1}, \dots, s^{i+j}\bigr\}$. If $i=0$ then we simply write $\mathcal{O}\bigl(s^{\leq j}\bigr)$. For example, $\mathcal{O}(e,s)=\mathcal{O}\bigl(s^{\leq 1}\bigr)$. 

\begin{remark}\label{indec}
Note that, for every (not necessarily cyclically connected) $A \subseteq W$, the $R$--bimodule $\mathcal{O}(A)$ is indecomposable. Indeed $\bigcup_{x\in A} \mathrm{Gr}(x)$ is a closed subscheme of $V\times V$, inducing a surjective map $\mathcal{O}(V\times V)\cong R\otimes_{\mathbb{C}} R\twoheadrightarrow \mathcal{O}(A)$ of algebras, compatible with the bimodule structure induced by the projections. It follows that $\mathcal{O}(A)$ is generated, as a graded $R$-bimodule, by any non-zero element in its degree zero component, hence that it is indecomposable, since this component is one-dimensional.

Moreover, observe that, for every $A \subseteq W$, the left and right actions of $R^s$ on  $\mathcal{O}(A)$ are the same. Indeed, for $b\in R^s$ and $a \in \mathcal{O}(A)$, we have
$$ (a \cdot b) (u,v) = a(u,v)b(v)=a(u,v)b(u)=(b \cdot a) (u,v)$$
for all $(u,v) \in \bigcup_{x\in A} \mathrm{Gr}(x)$, where the middle equality follows from the fact that $b\in R^s$ and $u=s^i v$ for some $i$.
\end{remark}

We shall study the (additive, graded, monoidal, Karoubian) category $\mathcal{B}_W$ generated by $\mathcal{O}\bigl(s^{\leq 1}\bigr)$ and stable by grading shifts. To this end, we need several technical results to understand how to decompose tensor products of rings of regular functions over cyclically connected subsets of $W$ (viewed as graded $R$--bimodules). 

Note that as a graded $R$--bimodule, $\mathcal{O}\bigl(s^i\bigr)$ is isomorphic to $R_{s^i}$, that is, $R$ with the right operation of $R=\mathbb{C}[X]$ twisted by $s^i$ (for $r\in R_{s^i}$, we have $r\cdot X= s^i(X)r= \zeta^{-i}Xr$, while $X\cdot r=Xr$). Indeed the embedding $\iota : V \hookrightarrow V\times V, v\mapsto\bigl(v, s^{-i}v\bigr)$ induces an isomorphism $\mathcal{O}\bigl(s^i\bigr) \xrightarrow{\sim} R_{s^i}, a \mapsto a \circ \iota$ of graded bimodules. It immediately follows that
\begin{equation}\label{isostwists}
\mathcal{O}\bigl(s^i\bigr)\otimes_R \mathcal{O}\bigl(s^j\bigr)\cong \mathcal{O}\bigl(s^{i+j}\bigr)\cong \mathcal{O}\bigl(s^j\bigr)\otimes_R \mathcal{O}\bigl(s^i\bigr),
\end{equation}
in particular, these rings give a categorification of $W$. In most of the calculations, we will identify $\mathcal{O}\bigl(s^i\bigr)$ with the ring $R=\mathcal{O}(V)$ with right operation twisted by $s^i$ and hence implicitly identify $a\bigl(u, s^{-i}u \bigr)$ with $a(u)$. 

\begin{lemma}\label{lem:isos}
There are isomorphisms of graded $R$--bimodules 
$$\mathcal{O}\bigl(s^i\bigr)\otimes_R \mathcal{O}\bigl(s^{\leq j}\bigr)\cong \mathcal{O}\bigl(s^{[i, i+j]}\bigl)\cong \mathcal{O}\bigl(s^{\leq j}\bigr) \otimes_R \mathcal{O}\bigl(s^i\bigr).$$ 
\end{lemma}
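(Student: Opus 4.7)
The plan is to compute both tensor products by presenting the bimodules as explicit quotients of the polynomial ring $R \otimes_{\mathbb{C}} R$ and eliminating the shared middle variable via the linear defining equation of $\mathrm{Gr}(s^i)$.

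Fix a coordinate $X$ on $V$ so that $R = \mathbb{C}[X]$ and $s \cdot X = \zeta^{-1} X$, and write $X_1, X_2$ for the pullbacks of $X$ along the two projections $V \times V \to V$, so $\mathcal{O}(V \times V) \cong \mathbb{C}[X_1, X_2]$. The reversed graph $\mathrm{Gr}(s^k)$ is cut out by the principal prime ideal $(X_1 - \zeta^k X_2)$. Since the lines $\mathrm{Gr}(s^k)$ for distinct $k$ are pairwise coprime principal primes in the UFD $\mathbb{C}[X_1, X_2]$, the vanishing ideal of any union is the product of these principal ideals, yielding
\[
\mathcal{O}\bigl(s^{\leq j}\bigr) \cong \mathbb{C}[X_1, X_2]\Big/\Bigl(\prod_{k=0}^{j}(X_1 - \zeta^k X_2)\Bigr), \qquad \mathcal{O}\bigl(s^{[i, i+j]}\bigr) \cong \mathbb{C}[X_1, X_2]\Big/\Bigl(\prod_{k=0}^{j}(X_1 - \zeta^{i+k} X_2)\Bigr),
\]
with the $R$-bimodule structure given by left multiplication by $X_1$ and right multiplication by $X_2$. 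Both relations are homogeneous because $\zeta$ lives in degree zero.

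To compute the left-hand tensor product, introduce a middle coordinate $Y$ for the shared copy of $R$; this gives
\[
\mathcal{O}\bigl(s^i\bigr) \otimes_R \mathcal{O}\bigl(s^{\leq j}\bigr) \cong \mathbb{C}[X_1, Y, X_2]\Big/\Bigl(X_1 - \zeta^i Y,\ \prod_{k=0}^j (Y - \zeta^k X_2)\Bigr),
\]
where $X_1$ governs the left $R$-action and $X_2$ the right one. The first relation is linear in $Y$, so the substitution $Y = \zeta^{-i} X_1$ eliminates $Y$ and transforms the product into $\prod_{k=0}^j (\zeta^{-i} X_1 - \zeta^k X_2) = \zeta^{-i(j+1)} \prod_{k=0}^j (X_1 - \zeta^{i+k} X_2)$, which generates the same principal ideal as $\prod_{k=0}^j (X_1 - \zeta^{i+k} X_2)$. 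This is exactly the presentation of $\mathcal{O}\bigl(s^{[i, i+j]}\bigr)$ above; the identification preserves the grading (since $\zeta$ has degree zero and $Y$, $X_1$ both have degree two) and the bimodule structure. The second isomorphism is entirely analogous: for $\mathcal{O}\bigl(s^{\leq j}\bigr) \otimes_R \mathcal{O}\bigl(s^i\bigr)$ the linear relation $Y - \zeta^i X_2 = 0$ coming from the right factor eliminates $Y = \zeta^i X_2$, and substitution into the first factor's product yields $\prod_k (X_1 - \zeta^{i+k} X_2)$ directly, with no scalar adjustment needed.

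The main subtlety is the initial presentation of $\mathcal{O}(A)$ as the quotient by the principal ideal generated by a product of linear forms; once it is granted that the ideal of vanishing functions on a union of pairwise distinct lines through the origin in $V \times V$ coincides with the product of the corresponding principal prime ideals, the rest is a mechanical substitution in a polynomial ring. In particular, no technicalities about the scheme-theoretic fiber product are needed, because the single linear relation coming from $\mathcal{O}(s^i)$ suffices to eliminate the middle variable cleanly.
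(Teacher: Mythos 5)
Your proof is correct, and it takes a genuinely different route from the paper's. The paper's proof is a one-liner: it simply writes down the explicit function-theoretic formulas
$a\otimes b\mapsto \bigl\{(u,v)\mapsto a(u)\,b\bigl(s^{-i}u,v\bigr)\bigr\}$
and
$a\otimes b\mapsto \bigl\{(u,v)\mapsto a\bigl(u,s^iv\bigr)\,b\bigl(s^iv\bigr)\bigr\}$
and leaves well-definedness, bimodule compatibility and bijectivity as an exercise. You instead present each $\mathcal{O}(A)$ as a quotient of $\mathbb{C}[X_1,X_2]$ by a product of linear forms and perform the elimination of the middle variable explicitly. This makes the verification (grading, bimodule structure, bijectivity) completely transparent, and it effectively proves Corollary~\ref{identB} at the same time, whereas the paper derives Corollary~\ref{identB} \emph{from} Lemma~\ref{lem:isos} together with Lemma~\ref{generateur}. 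There is no circularity in your account: the one ingredient you borrow from later in the paper, namely that $I\bigl(\bigcup_k\mathrm{Gr}(s^k)\bigr)$ is principal and generated by $\prod_k(X_1-\zeta^k X_2)$, you justify independently by the clean observation that in the UFD $\mathbb{C}[X_1,X_2]$ the intersection of pairwise-coprime principal prime ideals is their product; this is actually shorter than the paper's inductive degree argument in Lemma~\ref{generateur}. The only thing the paper's approach buys is brevity; yours buys self-containment and rigor, and in particular takes care of the scalar $\zeta^{-i(j+1)}$ that appears after substitution, noting (correctly) that it does not change the ideal.
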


\begin{proof}
The isomorphism $\mathcal{O}\bigl(s^i\bigr)\otimes_R \mathcal{O}\bigl(s^{\leq j}\bigr)\cong \mathcal{O}\bigr(s^{[i, i+j]}\bigl)$ is given by 
$$a\otimes b\mapsto \bigl\{(u,v)\mapsto a(u) b\bigl(s^{-i}u,v\bigr)\bigr\}.$$
The isomorphism $\mathcal{O}\bigl(s^{\leq j}\bigr)\otimes_R \mathcal{O}\bigl(s^i\bigr)\cong \mathcal{O}\bigl(s^{[i, i+j]}\bigr)$ is given by 
$$a\otimes b\mapsto \bigl\{ (u,v)\mapsto a\bigl(u, s^i v \bigr) b\bigl(s^i v\bigr)\bigr\}.$$  
\end{proof}
  
In particular, since $\mathcal{O}\bigl(s^{\leq d-1}\bigr) = \mathcal{O}(W)$, these isomorphisms of graded $R$--bimodules specialize, for all $0\leq i \leq d-1$, to
\begin{equation}\label{isosFS}
\mathcal{O}\bigl(s^i\bigr)\otimes_R \mathcal{O}(W)\cong \mathcal{O}(W)\cong \mathcal{O}(W) \otimes_R \mathcal{O}\bigl(s^i\bigr).
\end{equation}

\begin{coro}\label{cor:isos}
There are isomorphisms of graded $R$--bimodules 
$$\mathcal{O}\bigl(s^i\bigr)\otimes_R \mathcal{O}(A)\cong \mathcal{O}(A) \otimes_R \mathcal{O}\bigl(s^i\bigr),$$
for all $i=1, \dots, d-1$ and $A$ cyclically connected.
\end{coro}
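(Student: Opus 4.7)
The idea is simply to reduce the general cyclically connected case $A=\{s^k,s^{k+1},\dots,s^{k+j}\}$ back to the case $A=s^{\leq j}$ already handled in Lemma~\ref{lem:isos}, by factoring $\mathcal{O}(A)$ as a tensor product of $\mathcal{O}\bigl(s^k\bigr)$ with $\mathcal{O}\bigl(s^{\leq j}\bigr)$ and then commuting $\mathcal{O}\bigl(s^i\bigr)$ past each of these two factors.

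More concretely, the plan is as follows. First, write $A=\bigl\{s^k,s^{k+1},\dots,s^{k+j}\bigr\}$ and invoke Lemma~\ref{lem:isos} (with $(i,j)$ replaced by $(k,j)$) to obtain
$$\mathcal{O}(A)=\mathcal{O}\bigl(s^{[k,k+j]}\bigr)\cong \mathcal{O}\bigl(s^k\bigr)\otimes_R \mathcal{O}\bigl(s^{\leq j}\bigr)\cong \mathcal{O}\bigl(s^{\leq j}\bigr)\otimes_R \mathcal{O}\bigl(s^k\bigr),$$
so both sides of the statement can be rewritten as a threefold tensor product involving $\mathcal{O}\bigl(s^i\bigr)$, $\mathcal{O}\bigl(s^k\bigr)$ and $\mathcal{O}\bigl(s^{\leq j}\bigr)$. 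Second, use the commutativity of the twisted line bimodules provided by~(\ref{isostwists}) to interchange $\mathcal{O}\bigl(s^i\bigr)$ and $\mathcal{O}\bigl(s^k\bigr)$, and then apply Lemma~\ref{lem:isos} once more to interchange $\mathcal{O}\bigl(s^i\bigr)$ and $\mathcal{O}\bigl(s^{\leq j}\bigr)$. Chaining these isomorphisms gives
$$\mathcal{O}\bigl(s^i\bigr)\otimes_R \mathcal{O}(A)\cong \mathcal{O}\bigl(s^i\bigr)\otimes_R \mathcal{O}\bigl(s^k\bigr)\otimes_R \mathcal{O}\bigl(s^{\leq j}\bigr)\cong \mathcal{O}\bigl(s^k\bigr)\otimes_R \mathcal{O}\bigl(s^{\leq j}\bigr)\otimes_R \mathcal{O}\bigl(s^i\bigr)\cong \mathcal{O}(A)\otimes_R \mathcal{O}\bigl(s^i\bigr),$$
as required.

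There is essentially no obstacle beyond bookkeeping: all the needed building-block isomorphisms are already established, and the associativity of $\otimes_R$ together with the fact that $\mathcal{O}\bigl(s^i\bigr)$ commutes (up to isomorphism) past both tensor factors of $\mathcal{O}(A)$ makes the chain above unambiguous. One could alternatively avoid the intermediate factorisation by writing down an explicit map directly, mimicking the formulas of Lemma~\ref{lem:isos} with $u\mapsto s^{-i}u$ and $v\mapsto s^i v$ shifts adapted to the starting index $k$ of $A$, but the modular argument via Lemma~\ref{lem:isos} and~(\ref{isostwists}) is shorter and makes clear that no hypothesis on $A$ beyond cyclic connectedness (and in particular, no relation between $i$ and the indices occurring in $A$) is needed.
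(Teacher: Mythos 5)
Your argument is correct and is exactly the intended one: the paper states this as an unproved corollary of Lemma~\ref{lem:isos}, and the way to fill in the details is precisely to factor $\mathcal{O}(A)\cong\mathcal{O}\bigl(s^k\bigr)\otimes_R\mathcal{O}\bigl(s^{\leq j}\bigr)\cong\mathcal{O}\bigl(s^{\leq j}\bigr)\otimes_R\mathcal{O}\bigl(s^k\bigr)$ via Lemma~\ref{lem:isos} and then commute $\mathcal{O}\bigl(s^i\bigr)$ past each factor using~\eqref{isostwists} and Lemma~\ref{lem:isos} again. One could shorten the chain slightly by noting that both sides collapse to $\mathcal{O}\bigl(s^{[i+k,\,i+k+j]}\bigr)$ via Lemma~\ref{lem:isos}, but your route amounts to the same thing.
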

 
Let $0\leq i \leq d-1$. Note that 
$$I_i:=I\bigl(\mathrm{Gr}(e)\cup\mathrm{Gr}(s)\cup \dots\cup \mathrm{Gr}\bigl(s^i\bigr)\bigr)$$
 is a homogeneous ideal (equivalently, it is a graded $R$--bimodule), since $I(\mathrm{Gr}(x))$ is homogeneous for every $x\in W$. Let 
$$P_i(X,Y):=(X-Y)(X-\zeta Y)\cdots (X-\zeta^i Y)\in\mathbb{C}[X,Y].$$

Note that
\begin{equation}\label{Pd-1}
P_{d-1}(X,Y) = X^d - Y^d.
\end{equation}

\begin{lemma}\label{generateur}
Let $0\leq i \leq d-1$. Then $I_i$ is generated (as homogeneous ideal, equivalently as graded $R$--bimodule) by $P_i$.
\end{lemma}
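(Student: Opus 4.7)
The plan is to compute the ideal $I_i$ explicitly inside $R\otimes_{\mathbb{C}}R\cong\mathbb{C}[X,Y]$, where $X$ denotes the coordinate on the first factor of $V\times V$ and $Y$ the coordinate on the second. With this identification, a pair $(u,v)\in V\times V$ lies on $\mathrm{Gr}(s^k)$ if and only if $u=s^k v=\zeta^k v$, i.e.\ if and only if the linear form $X-\zeta^k Y$ vanishes on $(u,v)$. Since the graph $\mathrm{Gr}(s^k)$ is the vanishing locus of the linear polynomial $X-\zeta^k Y$ in the affine plane $V\times V$, and this linear form is irreducible, the Nullstellensatz (or a direct one-variable argument after setting $Y=1$) yields
$$I(\mathrm{Gr}(s^k))=(X-\zeta^k Y).$$

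From this I would deduce
$$I_i=\bigcap_{k=0}^{i}I(\mathrm{Gr}(s^k))=\bigcap_{k=0}^{i}(X-\zeta^k Y).$$
Since $\zeta$ is a primitive $d$-th root of unity and $0\le k\le i\le d-1$, the scalars $\zeta^k$ are pairwise distinct, so the linear forms $X-\zeta^k Y$ are pairwise non-associate irreducibles in the UFD $\mathbb{C}[X,Y]$, hence pairwise coprime. In a UFD, the intersection of finitely many pairwise coprime principal ideals equals the ideal generated by their product (the least common multiple being the product). This gives
$$I_i=\Bigl(\prod_{k=0}^{i}(X-\zeta^k Y)\Bigr)=(P_i),$$
which is exactly the claim.

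The argument is clean; the only thing to check carefully is that the identification of $R\otimes_{\mathbb{C}}R$ with $\mathbb{C}[X,Y]$ matches the chosen sign conventions, in particular the fact that $s\cdot X=\zeta^{-1}X$ in $R$ (because $s$ acts on $V$ by $\zeta$) is consistent with the graph $\mathrm{Gr}(s^k)$ being cut out by $X-\zeta^k Y$ rather than $X-\zeta^{-k}Y$: the defining equation involves the action of $s^k$ on points of $V$, not on $V^*$. Once that bookkeeping is made explicit, the rest of the proof is a one-line application of unique factorization in $\mathbb{C}[X,Y]$, and no genuine obstacle remains.
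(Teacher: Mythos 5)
Your proof is correct, and it takes a genuinely different route from the paper's. The paper's proof proceeds by induction on $i$: given a homogeneous $P\in I_i\subseteq I_{i-1}$, it writes $P=QP_{i-1}$ by the inductive hypothesis, notes that $P_{i-1}(\zeta^i,1)\neq 0$ forces $Q(\zeta^i,1)=0$, and then uses homogeneity of $Q$ to extract the linear factor $X-\zeta^iY$. You instead identify each $I(\mathrm{Gr}(s^k))$ as the principal ideal $(X-\zeta^k Y)$ via the Nullstellensatz, observe that $I_i$ is the intersection of these ideals, and conclude by the UFD fact that the intersection of finitely many pairwise coprime principal ideals is generated by their product. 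Both routes ultimately hinge on the same elementary fact --- the $\zeta^k$ are pairwise distinct, so the linear forms $X-\zeta^kY$ are pairwise non-associate --- but the paper packages it as an explicit polynomial-division induction, while you package it as a one-step application of standard commutative algebra. Your version is shorter and arguably more conceptual; the paper's is more self-contained and makes the role of homogeneity explicit, which fits the surrounding setup (where $I_i$ is treated throughout as a graded $R$-bimodule). Your closing remark about matching the sign convention $s\cdot X=\zeta^{-1}X$ against the defining equation $X-\zeta^kY$ of $\mathrm{Gr}(s^k)$ is indeed the one bookkeeping point worth spelling out, and your resolution of it is correct.
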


\begin{proof}
We argue by induction on $i$. If $i=0$, then $I_0=I(\mathrm{Gr}(e))$ is just the ideal of the diagonal $\Delta$ in $V\times V$ which is generated by $X-Y$. Hence, we can assume that $i\geq 1$ and that the result holds for $i-1$. Let $P(X,Y)\in I_i$ be homogeneous. By induction, there exists $Q\in\mathbb{C}[X,Y]$ such that 
$$P(X,Y)=Q(X,Y) (X-Y) (X-\zeta Y) \cdots \bigl(X-\zeta^{i-1}Y\bigr)=Q(X,Y) P_{i-1}(X,Y),$$
 since in particular $P\left(X,Y\right)$ vanishes on the closed subsets $\mathrm{Gr}(e), \cdots, \mathrm{Gr}\bigl(s^{i-1}\bigr)$, hence lies in $I_{i-1}$. Since $i<d$ we have that $P_{i-1}\bigl(\zeta^i, 1\bigr)\neq 0$, which implies that $Q\bigl(\zeta^i, 1\bigr)=0$ since $P$ has to vanish on $\mathrm{Gr}\bigl(s^i\bigr)$. Since both $P$ and $P_{i-1}$ are homogeneous, we have that $Q$ is homogeneous as well. Hence, writing $Q(X,Y)= \sum_{j} \alpha_j X^j Y^{k-j}$, we get that $Q(X, 1)=\sum_j \alpha_j X^j=\bigl(X-\zeta^i\bigr) Q_1(X)$ for some $Q_1\in \mathbb{C}[X]$ as $Q\bigl(\zeta^i, 1\bigr)=0$. But $Q$ is homogeneous, implying that $Q(X,Y)= \bigl(X-\zeta^i Y\bigr) Q_2(X,Y)$ for some $Q_2\in\mathbb{C}[X,Y]$, which concludes.    
\end{proof}

In particular, the kernel of the map 
\begin{equation*}
R\otimes_{\mathbb{C}} R\cong \mathcal{O}(V \times V)\twoheadrightarrow \mathcal{O}\bigl(s^{\leq i}\bigr)
\end{equation*}
 is generated by $P_i$ as a graded $R$--bimodule.

More generally the following holds:

\begin{coro}\label{identB}
Let $0\leq i \leq d-1$ and $0\leq j \leq d-1$. The $R$--bimodule $\mathcal{O}\bigl(s^{[i,i+j]}\bigr)$ can be identified with the quotient of the ring $\C[X,Y]$ by the homogeneous ideal $I_{i,j}$ generated by $P_j\bigl(\zeta^{-i}X,Y\bigr)$, where the left (resp. right) action of $R$ on $\mathcal{O}\bigl(s^{[i,i+j]}\bigr)$ corresponds to the multiplication by the image of $\mathbb{C}[X]$ (resp. the image of $\mathbb{C}[Y]$) in $\C[X,Y]/I_{i,j}$.
\end{coro}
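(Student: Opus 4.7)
The plan is to mimic closely the inductive argument used in Lemma~\ref{generateur}, after making the geometry of the union of graphs explicit in coordinates. Identify $\mathcal{O}(V\times V)$ with $\mathbb{C}[X,Y]$, where $X$ (resp.\ $Y$) is the coordinate function on the first (resp.\ second) copy of $V\cong\mathbb{C}$. Since $s$ acts on $V$ by the scalar $\zeta$, one has $\mathrm{Gr}(s^{i+k})=\{(\zeta^{i+k}v,v)~|~v\in V\}$, whose defining ideal in $\mathbb{C}[X,Y]$ is $(X-\zeta^{i+k}Y)$.

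Next, prove by induction on $j$ that the ideal of $\bigcup_{k=0}^{j}\mathrm{Gr}(s^{i+k})$ is generated by $\prod_{k=0}^{j}(X-\zeta^{i+k}Y)$. The argument copies that of Lemma~\ref{generateur} almost verbatim: for the inductive step one writes a homogeneous element $P$ of the ideal as $Q\cdot\prod_{k=0}^{j-1}(X-\zeta^{i+k}Y)$, then uses the vanishing of $P$ on $\mathrm{Gr}(s^{i+j})$ together with the fact that the scalars $\zeta^{i+k}$ (for $k=0,\dots,j-1$) are all distinct from $\zeta^{i+j}$ (since $j\leq d-1$) to deduce that $Q(\zeta^{i+j},1)=0$, hence that $(X-\zeta^{i+j}Y)$ divides $Q$ by the homogeneity argument.

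Finally, observe the identity
$$P_j(\zeta^{-i}X,Y)=\prod_{k=0}^{j}(\zeta^{-i}X-\zeta^{k}Y)=\zeta^{-i(j+1)}\prod_{k=0}^{j}(X-\zeta^{i+k}Y),$$
which is a non-zero scalar multiple of the generator found above, and therefore generates the same homogeneous ideal $I_{i,j}$. The assertion on the bimodule structure is immediate: the left and right $R$-actions on $\mathcal{O}\bigl(s^{[i,i+j]}\bigr)$ come from the two projections $V\times V\to V$, which under the identification above correspond exactly to multiplication by the images of $\mathbb{C}[X]$ and $\mathbb{C}[Y]$.

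The one step that requires some care is the inductive argument showing that the product $\prod_{k=0}^{j}(X-\zeta^{i+k}Y)$ generates (rather than merely lies in) the ideal of the union, but this presents no new difficulty beyond what has already been done in Lemma~\ref{generateur}. An alternative route, combining Lemma~\ref{lem:isos} with Lemma~\ref{generateur}, would avoid a second induction but would force us to track the twisted bimodule structure of $\mathcal{O}(s^i)\otimes_R\mathcal{O}(s^{\leq j})$ explicitly, and amounts to essentially the same computation.
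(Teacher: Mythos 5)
Your proof is correct. What you do differently from the paper: you re-run the induction of Lemma~\ref{generateur} directly on the shifted union of graphs $\bigcup_{k=0}^{j}\mathrm{Gr}\bigl(s^{i+k}\bigr)$, showing its ideal is generated by $\prod_{k=0}^{j}\bigl(X-\zeta^{i+k}Y\bigr)$, and then observe that this product is (up to the nonzero scalar $\zeta^{-i(j+1)}$) exactly $P_j\bigl(\zeta^{-i}X,Y\bigr)$. The paper instead takes the alternative route you mention in your last paragraph: it invokes Lemma~\ref{lem:isos} to write $\mathcal{O}\bigl(s^{[i,i+j]}\bigr)\cong\mathcal{O}\bigl(s^{i}\bigr)\otimes_R\mathcal{O}\bigl(s^{\leq j}\bigr)$, identifies this tensor product over $\C[Z]$ with $\C[X,Z]/(X-\zeta^i Z)$ tensored with $\C[Z,Y]/\bigl(P_j(Z,Y)\bigr)$, and computes the result to be $\C[X,Y]/\bigl(P_j(\zeta^{-i}X,Y)\bigr)$ as an $R$--bimodule. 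Your route is slightly more self-contained (it proves, in effect, a mild generalization of Lemma~\ref{generateur} and does not rely on Lemma~\ref{lem:isos}), while the paper's route is shorter on the page because it reuses results already established; the twist-tracking you were worried about is handled there in one line by the substitution $X\mapsto\zeta^{-i}X$. Both arguments are sound and the verification in your inductive step that $\zeta^{i+j}\neq\zeta^{i+k}$ for $0\leq k<j\leq d-1$ is exactly the point that makes your version go through.
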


\begin{proof}
This follows from the fact that $\mathcal{O}\bigl(s^{[i,i+j]}\bigr)\cong\mathcal{O}\bigl(s^i\bigr)\otimes_R\mathcal{O}\bigl(s^{\leq j}\bigr)$ (see Lemma~\ref{lem:isos}). The latter can be identified with the tensor product over $\C[Z]$ of the quotient of $\C[X,Z]$ by the ideal generated by $P_0\bigl(X,\zeta^{i}Z\bigr)=X-\zeta^{i}Z$ with the quotient of $\C[Z,Y]$ by the homogeneous ideal generated by $P_j(Z,Y)$ (see Lemma~\ref{generateur}). This tensor product is isomorphic as $\C[X,Y]$--module, hence as $R$--bimodule, to the quotient of $\C[X,Y]$ by the homogeneous ideal generated by $P_j\bigl(\zeta^{-i}X,Y\bigr)$.
\end{proof}

In particular, if $A\subseteq W$ is cyclically connected, then $A$ is of the form $s^{[i,i+j]}$ for some $0\leq i,j \leq d-1$ and we will simply denote by $I_A$ the ideal $I_{i,j}$ and by $P_A(X,Y)$ the generator $P_j\bigl(\zeta^{-i}X,Y\bigr)$ of this ideal.

Note that throughout the paper, we will often switch from one description of these bimodules to the other one without mentioning it. 

\begin{coro}\label{free_left}
The bimodule $\mathcal{O}\bigl(s^{\leq i}\bigr)$ is free of rank $i+1$ as a left $R$--module, with basis given by $\bigl\{1, Y, \cdots, Y^i\bigr\}$. 
\end{coro}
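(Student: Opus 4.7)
The plan is to reduce the claim to a statement about a quotient of a polynomial ring and then apply Euclidean division in the variable $Y$.

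By Corollary \ref{identB} (with $i=0$ and $j=i$), the bimodule $\mathcal{O}\bigl(s^{\leq i}\bigr)$ is identified with $\mathbb{C}[X,Y]/(P_i(X,Y))$, where
$$P_i(X,Y) = (X-Y)(X-\zeta Y)\cdots(X-\zeta^i Y),$$
and the left action of $R=\mathbb{C}[X]$ is multiplication by the class of $X$. Viewing $\mathbb{C}[X,Y]$ as the polynomial ring $\mathbb{C}[X][Y]$ in one variable $Y$ over the base ring $\mathbb{C}[X]$, the generator $P_i$ is a polynomial in $Y$ of degree $i+1$, and its leading coefficient (up to sign and a root of unity) is
$$(-1)^{i+1}\zeta^{0+1+\cdots+i} = (-1)^{i+1}\zeta^{i(i+1)/2} \in \mathbb{C}^{\times}.$$
This is the crucial point: the leading coefficient lies in the base ring $\mathbb{C}[X]$ as a unit, so $P_i$ is essentially monic of degree $i+1$ in $Y$ over $\mathbb{C}[X]$.

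Now I would invoke Euclidean division in $\mathbb{C}[X][Y]$ by the (essentially) monic polynomial $P_i$: any $Q(X,Y) \in \mathbb{C}[X][Y]$ can be written uniquely as $Q = A\cdot P_i + R$ with $A \in \mathbb{C}[X][Y]$ and $R \in \mathbb{C}[X][Y]$ of $Y$-degree strictly less than $i+1$. Thus the class of $Q$ in $\mathbb{C}[X,Y]/(P_i)$ equals the class of $R$, which is a $\mathbb{C}[X]$-linear combination of $1, Y, \ldots, Y^i$. This shows that $\{1, Y, \ldots, Y^i\}$ generates $\mathcal{O}\bigl(s^{\leq i}\bigr)$ as a left $R$-module.

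Finally, for linear independence, suppose $\sum_{k=0}^{i} a_k(X)\, Y^k$ lies in the ideal $(P_i)$ inside $\mathbb{C}[X][Y]$. Any nonzero element of $(P_i)$ has $Y$-degree at least $i+1$, since multiplying by a nonzero element of $\mathbb{C}[X][Y]$ preserves the leading coefficient's non-vanishing (the leading coefficient of $P_i$ is a nonzerodivisor in $\mathbb{C}[X]$). Hence the combination must be the zero polynomial, and each $a_k(X)=0$. This completes the proof; I do not anticipate any real obstacle, since the entire argument hinges on the one clean observation that the leading $Y$-coefficient of $P_i$ is a nonzero scalar.
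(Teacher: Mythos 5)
Your proof is correct and relies on the same key inputs as the paper's: Lemma~\ref{generateur} (that $I_i$ is generated by $P_i$) together with the observation that $P_i$ has $Y$-degree $i+1$ with unit leading coefficient. The only difference is a matter of packaging. For generation, the paper simply asserts that it ``follows immediately'' from $I_i=(P_i)$; your Euclidean division over $\mathbb{C}[X][Y]$ makes this explicit. For independence, the paper first reduces to homogeneous elements using the grading and then compares $Y$-degrees, whereas you skip the homogeneity reduction entirely and argue directly that every nonzero element of $(P_i)$ has $Y$-degree at least $i+1$ because the leading $Y$-coefficient of $P_i$ is a nonzerodivisor (indeed a unit) in $\mathbb{C}[X]$. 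Your version is marginally cleaner since it does not invoke the graded structure, though the underlying computation is identical; the paper's version emphasizes homogeneity because the grading is heavily used throughout the rest of the article.
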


\begin{proof}
It follows immediately from the fact that $I_i$ is generated by $P_i$ that the above set generates $\mathcal{O}\bigl(s^{\leq i}\bigr)$ as a left $R$--module. Assume that $\sum_{j=0}^i  Q_j Y^j = 0$ in $\mathcal{O}\bigl(s^{\leq i}\bigr)$ for some $Q_j\in \mathbb{C}[X]$ and let us show that $Q_j=0$ for all $j$. Note that we can assume that the element is homogeneous, that is, that there is $k\in\mathbb{Z}_{\geq 0}$ such that $Q_j=\alpha_j X^{k-j}$ for some $\alpha_j\in\mathbb{C}$, for all $j$. It follows that the polynomial $P\in\mathbb{C}[X,Y]$ defined by 
$$\sum_{j=0}^i  \alpha_j X^{k-j} Y^j$$
 is in $I_i$, hence that $\alpha_j=0$ for all $j$, since $I_i$ is generated by $P_i$ which has a non-zero term of degree $i+1$ in $Y$ while $P$ has no term with power of $Y$ greater than $i$.    
\end{proof}

\section{Classification of indecomposable objects}

In this section, we classify the indecomposable bimodules in $\mathcal{B}_W$. 

\begin{prop}
Let $i\geq 1$. There is a short exact sequence of graded $R$--bimodules 
$$0\longrightarrow \mathcal{O}\bigl(s^{i}\bigr)[-2i]\longrightarrow \mathcal{O}\bigl(s^{\leq i}\bigr) \longrightarrow \mathcal{O}\bigl(s^{\leq i-1}\bigr)\longrightarrow 0,$$
where the surjective map is the restriction map $\pi$ and the injective map $\iota$, under the identification $R=\mathbb{C}[X]\cong \mathcal{O}\bigl(s^i\bigr)$ as left $R$--modules, is given by $r \mapsto r P_{i-1}$. 
\end{prop}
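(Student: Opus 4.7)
The plan is to identify all three bimodules, via Corollary~\ref{identB}, with explicit quotients of $\C[X,Y]$ and then exploit the factorization
\[
P_i(X,Y) = P_{i-1}(X,Y)\bigl(X - \zeta^i Y\bigr).
\]
Under these identifications, $\mathcal{O}\bigl(s^{\leq i}\bigr)\cong \C[X,Y]/(P_i)$, $\mathcal{O}\bigl(s^{\leq i-1}\bigr)\cong \C[X,Y]/(P_{i-1})$, and $\mathcal{O}\bigl(s^i\bigr)\cong \C[X,Y]/\bigl(X-\zeta^i Y\bigr)$. The above factorization shows $(P_i)\subseteq (P_{i-1})$, so the restriction map $\pi$ is simply the natural quotient map $\C[X,Y]/(P_i)\twoheadrightarrow \C[X,Y]/(P_{i-1})$, whose kernel is $(P_{i-1})/(P_i)$.

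Next I would define $\iota$ at the level of representatives by
\[
\iota\bigl(Q + \bigl(X-\zeta^i Y\bigr)\bigr) = Q\,P_{i-1} + (P_i).
\]
This is well-defined since $\bigl(X-\zeta^i Y\bigr)P_{i-1} = P_i$, and it is a map of graded $R$--bimodules because multiplication by the fixed polynomial $P_{i-1}$ commutes with the left and right actions of $\C[X]$ and $\C[Y]$. By construction its image lies in $(P_{i-1})/(P_i)=\ker(\pi)$, so it remains to verify that $\iota$ is injective and surjects onto this kernel.

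For injectivity: if $Q P_{i-1}\in (P_i)=(X-\zeta^i Y)P_{i-1}\C[X,Y]$, then since $\C[X,Y]$ is a domain and $P_{i-1}\neq 0$, we may cancel $P_{i-1}$ to deduce $Q\in \bigl(X-\zeta^i Y\bigr)$. For surjectivity onto $\ker(\pi)$: any element of $(P_{i-1})/(P_i)$ has a representative of the form $Q P_{i-1}$, which is by definition in the image of $\iota$. This establishes the exact sequence.

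Finally, a grading check: $P_{i-1}$ is homogeneous of degree $i$ in the variables $X,Y$, which under the convention $\deg(V^*)=2$ corresponds to graded degree $2i$. Hence multiplication by $P_{i-1}$ raises the grading by $2i$, so to make $\iota$ grading-preserving we take its source to be $\mathcal{O}\bigl(s^i\bigr)[-2i]$. Under the identification $R=\C[X]\cong \mathcal{O}\bigl(s^i\bigr)$ as left $R$--modules (reducing $Y$ to $\zeta^{-i}X$ modulo $X-\zeta^i Y$, as in Lemma~\ref{lem:isos} and the discussion preceding it), the map simply reads $r\mapsto r P_{i-1}$, matching the statement. None of the steps is genuinely hard; the only mild subtlety is keeping track of the various identifications, in particular reconciling the quotient presentation of $\mathcal{O}\bigl(s^i\bigr)$ with its description as $R$ with twisted right action.
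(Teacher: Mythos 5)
Your proof is correct and follows essentially the same route as the paper: both hinge on the factorization $P_i = P_{i-1}\bigl(X-\zeta^i Y\bigr)$, define $\iota$ as multiplication by $P_{i-1}$, prove injectivity by cancelling $P_{i-1}$ in the integral domain $\C[X,Y]$, and identify $\ker(\pi)$ with $(P_{i-1})/(P_i)$. The only cosmetic difference is that you verify the bimodule property by viewing $\mathcal{O}\bigl(s^i\bigr)$ as the quotient $\C[X,Y]/(X-\zeta^i Y)$ so that $\iota$ is visibly a $\C[X,Y]$-module map, whereas the paper works with $R_{s^i}$ and explicitly checks $X P_{i-1} = \zeta^i Y P_{i-1}$ in $\mathcal{O}\bigl(s^{\leq i}\bigr)$ — the same fact in different clothing.
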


\begin{proof}
Since $\mathcal{O}\bigl(s^{ i}\bigr)$ is free as a left $R$--module, the map $\iota$ is a morphism of left $R$--modules. To show that it is a morphism of (graded) bimodules, it suffices to show, using the identification $\mathcal{O}\bigl(s^i\bigr)\cong R_{s^i}$, that $XP_{i-1}(X,Y) = P_{i-1}(X,Y) \zeta^{i} Y$ in $\mathcal{O}\bigl(s^{\leq i}\bigr)$. This holds since $P_i(X,Y)=P_{i-1}(X,Y) \bigl(X- \zeta^i Y\bigr) = 0$ in $\mathcal{O}\bigl(s^{\leq i}\bigr)$. Moreover, it is injective since, if $r\in \mathbb{C}[X]$ is such that $r P_{i-1}=0$ in $\mathcal{O}\bigl(s^{\leq i}\bigr)$, then $r P_{i-1}$ is a multiple of $P_i$ (viewed as homogeneous polynomials in $\mathbb{C}[X,Y]$), hence $r$ has to be a multiple of $\bigl(X-\zeta^i Y\bigr)$, but since $r\in\mathbb{C}[X]$ it implies that $r=0$. Therefore the map $\iota$ is injective and we have $\mathrm{im}(\iota)\subseteq \ker(\pi)$ since $P_{i-1}$ is zero in $\mathcal{O}\bigl(s^{\leq i-1}\bigr)$. It remains to show that $\ker(\pi)\subseteq\mathrm{im}(\iota)$. This follows again from the fact that $P_{i-1}(X,Y) \bigl(X- \zeta^i Y\bigr) = 0$ in $\mathcal{O}\bigl(s^{\leq i}\bigr)$.     
\end{proof}

\begin{lemma}\label{recPkgen}
Let $k, i$ and $j$ be integers in $\{0, \cdots, d-1\}$ with $k=i+j$. We have 
$$ P_k(X,Y) = c_k^i P_i(X,Z) + d_k^j P_j(Z,Y),$$
where $c_k^i, d_k^j \in\C[X,Y,Z]$ are defined as $c_k^k=d_k^k=1$ for all $0 \leq k \leq d-1$, as
$$ \begin{cases}
c_k^0 = P_{k-1}(X, \zeta Y) + \sum_{r=0}^{k-2}P_r(Z,Y) P_{k-2-r}\bigl(X,\zeta^{r+2} Y\bigr) + P_{k-1}(Z,Y), \\
d_k^0  =  P_{k-1}( X, \zeta Y) + \sum_{r=0}^{k-2}\zeta^{r+1} P_r(X,Z) P_{k-2-r}\bigl( X, \zeta^{r+2} Y \bigr) + \zeta^{k} P_{k-1}(X,Z), 
\end{cases}$$
for all $1 \leq k \leq d-1$, (with the convention that the sum is $0$ when $k=1$), and, for $0 <i,j < k$, by induction on $k$ by
$$ \begin{cases}
c_k^i  =  c_{k-1}^{i-1}  + c_{k-1}^{i}\zeta^{i}\bigl( Z -\zeta^{j} Y\bigr), \\
d_k^j  =  d_{k-1}^{j}\bigl(X- \zeta^{i} Z \bigr) + \zeta^{i}d_{k-1}^{j-1}.
\end{cases}$$
\end{lemma}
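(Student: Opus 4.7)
The plan is to prove the identity by induction on $k$, separating the base cases where one of $i, j$ is zero, which use the explicit formulas for $c_k^0$ and $d_k^0$, from the remaining cases $0 < i, j < k$ which rely on the recursive formulas.

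For the base case $j = 0$ (hence $i = k$, with $c_k^k = 1$), the claim reduces to
\[ P_k(X,Y) - P_k(X,Z) = d_k^0\,(Z - Y), \]
and symmetrically for $i = 0$. I would derive both by the telescoping identity
\[ \prod_{r=0}^{k} a_r - \prod_{r=0}^{k} b_r = \sum_{s=0}^{k} \Bigl(\prod_{r=0}^{s-1} b_r\Bigr) (a_s - b_s) \Bigl(\prod_{r=s+1}^{k} a_r\Bigr). \]
For $c_k^0$, take $a_r = X - \zeta^r Y$ and $b_r = Z - \zeta^r Y$, so that $a_s - b_s = X - Z$ and the partial products become $P_{s-1}(Z,Y)$ and $P_{k-s-1}(X, \zeta^{s+1} Y)$, with the convention $P_{-1} = 1$. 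Extracting the extremal terms $s = 0$ and $s = k$ and reindexing the middle sum via $r = s - 1$ recovers the claimed expression for $c_k^0$. For $d_k^0$ one takes $b_r = X - \zeta^r Z$ instead, whence $a_s - b_s = \zeta^s(Z - Y)$, and the same reindexing produces the factors $\zeta^{r+1}$ and $\zeta^k$.

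For the inductive cases $0 < i, j < k$, the induction hypothesis applied to the pairs $(i-1, j)$ and $(i, j-1)$, both summing to $k - 1$, yields
\[ P_{k-1}(X,Y) = c_{k-1}^{i-1} P_{i-1}(X,Z) + d_{k-1}^{j} P_j(Z,Y) = c_{k-1}^{i} P_i(X,Z) + d_{k-1}^{j-1} P_{j-1}(Z,Y). \]
Substituting the recursive formulas for $c_k^i$ and $d_k^j$ into $c_k^i P_i(X,Z) + d_k^j P_j(Z,Y)$, and using $P_i(X,Z) = (X - \zeta^i Z) P_{i-1}(X,Z)$ together with $P_j(Z,Y) = (Z - \zeta^j Y) P_{j-1}(Z,Y)$, the expression regroups as
\[ (X - \zeta^i Z)\bigl[c_{k-1}^{i-1} P_{i-1}(X,Z) + d_{k-1}^{j} P_j(Z,Y)\bigr] + \zeta^i (Z - \zeta^j Y)\bigl[c_{k-1}^{i} P_i(X,Z) + d_{k-1}^{j-1} P_{j-1}(Z,Y)\bigr]. \]
By the two induction hypotheses this simplifies to $\bigl[(X - \zeta^i Z) + \zeta^i(Z - \zeta^j Y)\bigr] P_{k-1}(X,Y) = (X - \zeta^{k} Y) P_{k-1}(X,Y) = P_k(X,Y)$, since $i + j = k$.

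The only real obstacle is bookkeeping: the telescoping sum must be matched against the precise way $c_k^0$ and $d_k^0$ are written in the statement (isolated extremal terms plus a middle sum indexed by $r$), and one should check that small cases such as $k = 1$, where the middle sum is empty, are consistent with the stated conventions. No deeper idea is required beyond the factorization $P_k(X,Y) = (X - \zeta^k Y) P_{k-1}(X,Y)$.
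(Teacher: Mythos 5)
Your proposal is correct. For the inductive step ($0<i,j<k$) your argument coincides with the paper's: both use the decomposition $X-\zeta^k Y = (X-\zeta^i Z) + \zeta^i(Z-\zeta^j Y)$ and the two instances of the inductive hypothesis for $(i-1,j)$ and $(i,j-1)$; you run the algebra backward (starting from the putative right-hand side and collapsing to $P_k$) while the paper starts from $P_k$ and expands, but the content is identical. Where you genuinely differ is in the boundary case $i=0$ or $j=0$. The paper treats this inductively: it first expresses $P_k$ via the induction hypothesis in terms of $c_{k-1}^0$ and $d_{k-1}^{k-1}$ (resp.\ the symmetric pair), and then separately verifies, by a direct computation, that the displayed closed formula for $c_k^0$ (resp.\ $d_k^0$) indeed satisfies the recursion $c_k^0 = c_{k-1}^{0}\bigl(X-\zeta^k Y\bigr) + P_{k-1}(Z,Y)$ that falls out of this expansion. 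You instead derive the closed formulas for $c_k^0$ and $d_k^0$ in one stroke from the telescoping identity
$$\prod_{r=0}^{k} a_r - \prod_{r=0}^{k} b_r = \sum_{s=0}^{k} \Bigl(\prod_{r=0}^{s-1} b_r\Bigr)(a_s-b_s)\Bigl(\prod_{r=s+1}^{k} a_r\Bigr),$$
with $a_r = X-\zeta^r Y$ and $b_r = Z-\zeta^r Y$ (for $c_k^0$) or $b_r = X-\zeta^r Z$ (for $d_k^0$). This is tidier: it makes the formulas for $c_k^0,d_k^0$ appear directly rather than as something one must guess and then check against a recursion, and it renders the boundary case independent of $k$ so that the induction on $k$ is only needed for the interior range $0<i,j<k$. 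The reindexing you mention (extracting $s=0$ and $s=k$ and shifting $r=s-1$ in the middle sum) does match the statement term-by-term, as does the empty-sum convention for $k=1$.
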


\begin{proof}
We argue by induction on $k$. It is clear that 
$$P_0(X,Y) = P_0(X,Z) + P_0(Z,Y)=c_0^0 P_0(X,Z) + d_0^0 P_0(Z,Y).$$
Let us now detail the two cases $(i,j) =( 1,0)$ and $(i,j) =( 0,1)$. Since 
\begin{equation}\label{expc10}
c_1^0 = P_0 ( X, \zeta Y) + P_0( Z, Y) = X +Z- (1+\zeta) Y
\end{equation}
and
\begin{equation}\label{expd10}
d_1^0 = P_0( X, \zeta Y) + \zeta P_0( X,Z) = (1+ \zeta)X- \zeta Z - \zeta Y,
\end{equation}
we get that
\begin{eqnarray*}
P_1(X,Y ) & = & (X-Y)(X- \zeta Y) \\
& = & (X -Z +Z-Y)(X- \zeta Y) \\
& = & (X- \zeta Y)P_0(X,Z) +  (Z-Y)(X- \zeta Y) \\
& = & (X- \zeta Y)P_0(X,Z) +  (Z-Y)(X-Z + Z -\zeta Y) \\
& = & (X- \zeta Y)P_0(X,Z) +  ( Z-Y)P_0(X,Z) +P_1(Z,Y) \\
& = & (X +Z- (1+\zeta) Y)P_0(X,Z) +P_1(Z,Y)\\
& = & c_1^0 P_0(X,Z) + d_1^1 P_1(Z,Y)
\end{eqnarray*}
and, similarly,
\begin{eqnarray*}
P_1(X,Y ) & = & (X-Y)(X- \zeta Y) \\
& = &P_1(X,Z) + ((1+ \zeta)X- \zeta Z - \zeta Y)P_0(Z,Y)\\
& = &c_1^1 P_1(X,Z)+ d_1^0 P_0(Z,Y).
\end{eqnarray*}

Now assume that $k>1$. Firstly, let us assume that $i,j\neq 0$. In that case, we argue by induction on $k$. We use twice the induction hypothesis on $P_{k-1}\left( X,Y \right)$ (for $(i-1,j)$ and $(i,j-1)$) to conclude that
\begin{eqnarray*}
P_k(X,Y) & = & P_{k-1}( X, Y)\bigl(X- \zeta^{i+j} Y\bigr) \\
& = &P_{k-1}( X, Y )\bigl(X- \zeta^{i} Z + \zeta^{i} Z -\zeta^{i+j} Y\bigr)  \\
& = &\bigl( c_{k-1}^{i-1} P_{i-1}( X, Z) + d_{k-1}^{j} P_{j}( Z, Y ) \bigr)\bigl(X- \zeta^{i} Z \bigr) \\
& & +\bigl( c_{k-1}^{i} P_{i}( X, Z) + d_{k-1}^{j-1} P_{j-1}( Z, Y) \bigr) \zeta^{i}\bigl( Z -\zeta^{j} Y\bigr)  \\
& = &\bigl(c_{k-1}^{i-1}  + c_{k-1}^{i}\zeta^{i}\bigl( Z -\zeta^{j} Y\bigr)\bigr)P_{i}( X, Z ) \\
& &+\bigl( d_{k-1}^{j}\bigl(X- \zeta^{i} Z \bigr) + \zeta^{i}d_{k-1}^{j-1}  \bigr) P_{j}( Z, Y)\\
& = & c_k^i P_{i}( X, Z) + d_k^j P_{j}( Z, Y),
\end{eqnarray*}
which concludes for $i,j \neq 0$. 

It remains to deal with the case where $i=0$ or $j=0$. To this end we first express $P_k( X ,Y)$, using the induction hypothesis, in terms of $P_0(X ,Z)$ and $P_k(Z ,Y)$:
\begin{eqnarray*}
P_k(X,Y) & = & P_{k-1}(X, Y)\bigl(X- \zeta^{k} Y\bigr) \\
& = &\bigl( c_{k-1}^{0} P_{0}( X, Z) + d_{k-1}^{k-1} P_{k-1}(Z, Y) \bigr)\bigl(X- \zeta^{k} Y \bigr)  \\
& = &c_{k-1}^{0} \bigl(X- \zeta^{k} Y \bigr) P_{0}( X, Z) + d_{k-1}^{k-1} P_{k-1}( Z, Y) \bigl(X- Z + Z -\zeta^{k} Y \bigr)  \\
& = &\bigl( c_{k-1}^{0} \bigl(X- \zeta^{k} Y \bigr) + d_{k-1}^{k-1} P_{k-1}( Z, Y) \bigr) P_{0}( X, Z) + d_{k-1}^{k-1} P_{k}( Z, Y).
\end{eqnarray*}
Since $d_{k-1}^{k-1} = 1 = d_{k}^{k}$, it suffices to show that 
$$c_k^0 = c_{k-1}^{0} \bigl(X- \zeta^{k} Y \bigr) + P_{k-1}( Z, Y)$$
in order to conclude. This holds since
\begin{eqnarray*}
& ~ & c_{k-1}^{0} \bigl(X- \zeta^{k} Y \bigr) + P_{k-1}( Z, Y) \\
& = & \left[ P_{k-2}( X, \zeta Y) + \sum_{r=0}^{k-3}P_r( Z, Y) P_{k-3-r} \bigl( X, \zeta^{r+2} Y \bigr) + P_{k-2} (Z, Y)\right] \bigl(X- \zeta^{k} Y \bigr) \\
& & + P_{k-1}( Z, Y) \\
& = & P_{k-1}\bigl( X, \zeta Y \bigr) + \sum_{r=0}^{k-2}P_r( Z, Y) P_{k-2-r} \bigl( X, \zeta^{r+2} Y \bigr) + P_{k-1}(Z,Y) .
\end{eqnarray*}

Finally, we express $P_k( X , Y)$, using the induction hypothesis, in terms of $P_k( X ,Z)$ and $P_0( Z , Y)$:
\begin{eqnarray*}
P_k(X, Y) & = & P_{k-1}( X, Y)\bigl(X- \zeta^{k} Y\bigr) \\
& = &\bigl( c_{k-1}^{k-1} P_{k-1}( X, Z) + d_{k-1}^{0} P_{0}( Z, Y) \bigr)\bigl(X- \zeta^{k} Y \bigr)  \\
& = & c_{k-1}^{k-1} P_{k-1}( X, Z) \bigl(X- \zeta^{k}Z + \zeta^{k}Z -\zeta^{k} Y \bigr) + d_{k-1}^{0} \bigl(X- \zeta^{k} Y \bigr) P_{0}( Z, Y) \\
& = & c_{k-1}^{k-1} P_{k}( X, Z) + \bigl(c_{k-1}^{k-1} \zeta^{k} P_{k-1}( X, Z) + d_{k-1}^{0} \bigl(X- \zeta^{k} Y \bigr) \bigr) P_{0}( Z, Y).
\end{eqnarray*}
Since $c_{k-1}^{k-1} = 1 = c_{k}^{k}$, it suffices to show that 
$$d_k^0 = \zeta^{k} P_{k-1}( X, Z) + d_{k-1}^{0} \bigl(X- \zeta^{k} Y \bigr)$$
in order to conclude. This holds since
\begin{eqnarray*}
& ~ &\zeta^{k} P_{k-1}( X, Z) + d_{k-1}^{0} \bigl(X- \zeta^{k} Y \bigr) \\
& = & \zeta^{k} P_{k-1}( X, Z) + \left[ P_{k-2}\bigl( X, \zeta Y \bigr) + \sum_{r=0}^{k-3}\zeta^{r+1} P_r( X,Z) P_{k-3-r} \bigl( X, \zeta^{r+2} Y \bigr) \right. \\
& &  + \zeta^{k-1} P_{k-2}(X,Z) \Bigg] \bigl(X- \zeta^{k} Y \bigr)  \\
& = & P_{k-1}\bigl( X, \zeta Y \bigr) + \sum_{r=0}^{k-2}\zeta^{r+1} P_r( X,Z) P_{k-2-r} \bigl( X, \zeta^{r+2} Y \bigr) + \zeta^{k} P_{k-1} (X,Z),
\end{eqnarray*}
which completes the proof.
\end{proof}

In the sequel, it will be useful to have a closed formula for these coefficients in the case where $k=i+1$.

\begin{lemma}\label{recPkexpl}
Let $0 \leq i \leq d-2$. A closed formula for the polynomials $c_{i+1}^i$ and $d_{i+1}^i$ in $\C[X,Y,Z]$ defined in Lemma~\ref{recPkgen} is given by
$$ \begin{cases}
c_{i+1}^i (X,Y,Z) = X + \bigl( \sum_{r=0}^{i} \zeta^{r}\bigr) Z - \bigl(\sum_{r=0}^{i+1} \zeta^{r}\bigr) Y,\\
d_{i+1}^i (X,Y,Z) = \bigl( \sum_{r=0}^{i+1} \zeta^{r}\bigr) X - \bigl(\sum_{r=1}^{i+1} \zeta^{r}\bigr) Z -  \zeta^{i+1} Y.
\end{cases}$$
\end{lemma}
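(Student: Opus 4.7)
The plan is to establish both closed formulas by induction on $i$, feeding them into the recursive identities of Lemma~\ref{recPkgen}. For the base case $i = 0$, the claimed closed forms specialize to $c_1^0 = X + Z - (1+\zeta)Y$ and $d_1^0 = (1+\zeta)X - \zeta Z - \zeta Y$, which coincide exactly with the expressions~(\ref{expc10}) and~(\ref{expd10}) already extracted in the proof of Lemma~\ref{recPkgen}.

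For the inductive step, one must carefully track to which decomposition $(i',j')$ of $k = i+1$ each of $c_{i+1}^i$ and $d_{i+1}^i$ is attached, keeping in mind that in the identity $P_k(X,Y) = c_k^i P_i(X,Z) + d_k^j P_j(Z,Y)$ the superscript in $c_k^i$ (resp.~$d_k^j$) labels the first (resp.~second) of the two indices $i, j$ of the decomposition. Thus $c_{i+1}^i$ is attached to the decomposition $(i,1)$, and the recursion of Lemma~\ref{recPkgen} reads
$$ c_{i+1}^i = c_i^{i-1} + c_i^i \, \zeta^i (Z - \zeta Y), $$
where $c_i^i = 1$ by the boundary convention and $c_i^{i-1}$, which comes from the decomposition $(i-1, 1)$ of $i$, is given by the induction hypothesis. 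Substituting and absorbing the extra $\zeta^i Z$ and $\zeta^{i+1} Y$ into the geometric sums $\sum_{r=0}^{i-1}\zeta^r$ and $\sum_{r=0}^{i}\zeta^r$ respectively yields the asserted formula.

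Analogously, $d_{i+1}^i$ is attached to the decomposition $(1,i)$, and the recursion reads
$$ d_{i+1}^i = d_i^i \bigl(X - \zeta Z\bigr) + \zeta \, d_i^{i-1}, $$
with $d_i^i = 1$ and $d_i^{i-1}$ (from the decomposition $(1, i-1)$ of $i$) supplied by the induction hypothesis; expanding and regrouping the resulting powers of $\zeta$ in the coefficients of $X$ and $Z$ produces the asserted closed form. The computation itself is routine manipulation of geometric sums in $\zeta$; the only point one has to be careful about is maintaining the two strands $\{c_i^{i-1}\}_i$ and $\{d_i^{i-1}\}_i$ of the induction, which belong to distinct decomposition families despite being labelled by a common superscript convention.
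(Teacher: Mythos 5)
Your proposal is correct and follows essentially the same route as the paper: induction on $i$ anchored at the base cases (\ref{expc10}) and (\ref{expd10}), applying the recursions $c_{i+1}^i = c_i^{i-1} + c_i^i\,\zeta^i(Z-\zeta Y)$ and $d_{i+1}^i = d_i^i(X-\zeta Z) + \zeta\, d_i^{i-1}$ with $c_i^i = d_i^i = 1$. The paper simply carries out the substitution and regrouping of the geometric sums explicitly where you describe it verbally, so the two arguments are the same.
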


\begin{proof}
In the proof of Lemma \ref{recPkgen}, we have already seen (\eqref{expc10} and \eqref{expd10}) that the formulas hold for $i=0$. We proceed by induction on $i$ using both the recursive relation on these coefficients and the fact that $c_i^i=d_i^i=1$. We get
\begin{eqnarray*}
c_{i+1}^i & = & c_i^{i-1} + c_i^i \zeta^i \bigl(Z-\zeta Y \bigr) \\
& = & X + \left( \sum_{r=0}^{i-1} \zeta^{r} \right) Z - \left( \sum_{r=0}^{i} \zeta^{r} \right) Y +  \zeta^i \bigl(Z-\zeta Y \bigr) \\
& = & X + \left( \sum_{r=0}^{i} \zeta^{r} \right) Z - \left( \sum_{r=0}^{i+1} \zeta^{r} \right) Y
\end{eqnarray*}
and
\begin{eqnarray*}
d_{i+1}^i & = & \zeta d_i^{i-1} + d_i^i \bigl(X-\zeta Z \bigr) \\
& = & \zeta \left( \left( \sum_{r=0}^{i} \zeta^{r} \right) X - \left( \sum_{r=1}^{i} \zeta^{r} \right) Z -  \zeta^{i} Y \right) +  \bigl(X-\zeta Z \bigr) \\
& = & \left( \sum_{r=0}^{i+1} \zeta^{r} \right) X - \left( \sum_{r=1}^{i+1} \zeta^{r} \right) Z -  \zeta^{i+1} Y.
\end{eqnarray*}
\end{proof}

In the next lemmas, we show some technical results on tensors products of some specific $R$--bimodules, which will enable us to deduce that tensor products of the form $\mathcal{O}(A)\otimes_R \mathcal{O}(B)$, where $A,B\subseteq W$ are cyclically connected, always decompose (as graded $R$--bimodules) in a direct sum of (possibly shifted) rings of regular functions on graphs of cyclically connected subsets. This will allow us to classify the indecomposable objects in $\mathcal{B}_W$.
																																								
\begin{prop}\label{inj1}
Let $1\leq i< d-1$. There is an injective homomorphism of graded $R$--bimodules 
$$\varphi: \mathcal{O}\bigl(s^{\leq i+1}\bigr)\longrightarrow \mathcal{O}\bigl(s^{\leq 1}\bigr)\otimes_R \mathcal{O}\bigl(s^{\leq i}\bigr)$$
induced by $1\mapsto 1\otimes 1$.      
\end{prop}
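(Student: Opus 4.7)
The plan is to identify both bimodules as quotients of polynomial rings via Corollary~\ref{identB}: $\mathcal{O}\bigl(s^{\leq i+1}\bigr) \cong \mathbb{C}[X,Y]/(P_{i+1}(X,Y))$, and combining Corollary~\ref{identB} with the standard identification of tensor products of cyclic algebras yields $\mathcal{O}\bigl(s^{\leq 1}\bigr) \otimes_R \mathcal{O}\bigl(s^{\leq i}\bigr) \cong \mathbb{C}[X,Z,Y]/(P_1(X,Z), P_i(Z,Y))$, where in each case $X$ gives the left $R$-action and $Y$ the right $R$-action. The candidate bimodule map $\varphi$ then sends the class of $F(X,Y)$ to the class of the same polynomial in the larger quotient. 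The proposition reduces to the ideal identity
$$(P_1(X,Z), P_i(Z,Y)) \cap \mathbb{C}[X,Y] = (P_{i+1}(X,Y))$$
in $\mathbb{C}[X,Y]$: the inclusion $\supseteq$ yields well-definedness of $\varphi$, while the inclusion $\subseteq$ implies that any polynomial lift of a kernel element of $\varphi$ lies in $(P_{i+1}(X,Y))$, hence represents zero in $\mathcal{O}\bigl(s^{\leq i+1}\bigr)$.

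The inclusion $\supseteq$ is precisely Lemma~\ref{recPkgen} applied with $k = i+1$ and decomposition indices $(1,i)$, yielding $P_{i+1}(X,Y) = c_{i+1}^1 P_1(X,Z) + d_{i+1}^i P_i(Z,Y)$ in $\mathbb{C}[X,Y,Z]$. For the reverse inclusion, I would take $F(X,Y)$ in the intersection and fix a decomposition $F = A\, P_1(X,Z) + B\, P_i(Z,Y)$ with $A, B \in \mathbb{C}[X,Y,Z]$. For each $k \in \{0, 1, \dots, i+1\}$ I would specialize $X = \zeta^k Y$ together with $Z = \zeta^{m_k} Y$, where $m_k \in \{k-1, k\} \cap \{0, 1, \dots, i\}$. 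Under $X = \zeta^k Y$ the polynomial $P_1(\zeta^k Y, Z) = (\zeta^k Y - Z)(\zeta^k Y - \zeta Z)$ vanishes at $Z \in \{\zeta^{k-1} Y, \zeta^k Y\}$, while $P_i(Z, Y)$ vanishes at $Z = \zeta^m Y$ for $m \in \{0, 1, \dots, i\}$. The intersection $\{k-1, k\} \cap \{0, 1, \dots, i\}$ is nonempty thanks to the hypothesis $i+1 \leq d-1$: one may take $m_k = k$ for $0 \leq k \leq i$ and $m_k = i = k-1$ for $k = i+1$. Substituting these values in the decomposition of $F$ yields $F(\zeta^k Y, Y) = 0$ in $\mathbb{C}[Y]$; thus $X - \zeta^k Y$ divides $F$ in the UFD $\mathbb{C}[X,Y]$. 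Since the $\zeta^k$ are pairwise distinct for $0 \leq k \leq i+1 < d$, these linear forms are pairwise coprime and their product $P_{i+1}(X,Y) = \prod_{k=0}^{i+1}(X - \zeta^k Y)$ divides $F$ in $\mathbb{C}[X,Y]$.

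The main obstacle is the inclusion $\subseteq$ of the ideal identity; the specialization argument works only because the hypothesis $i < d-1$ guarantees that for each $k \in \{0, 1, \dots, i+1\}$ the two polynomials $P_1(\zeta^k Y, Z)$ and $P_i(Z, Y)$ admit a common root in $Z$ over $\mathbb{C}(Y)$. Once this identity is in hand, well-definedness and injectivity of $\varphi$ follow together, completing the proof.
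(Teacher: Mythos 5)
Your proof is correct, and the injectivity argument genuinely differs from the paper's. Both proofs use the same identifications from Corollary~\ref{identB} and establish well-definedness via the decomposition of $P_{i+1}$ from Lemma~\ref{recPkgen}. For injectivity, the paper works with the free left-$R$-module bases $\{1, Y, \dots, Y^{i+1}\}$ and $\{1, Y, \dots, Y^i, Z, ZY, \dots, ZY^i\}$ and rules out a nontrivial $\mathbb{C}[X]$-linear relation among $1, Y, \dots, Y^{i+1}$ by tracking the coefficient of the monomial $ZY^i$ in a hypothetical expression $Q_1 P_1(X,Z) + Q_2 P_i(Z,Y)$, a degree-counting argument; the hypothesis $i < d-1$ enters there to ensure $P_i(Z,Y)$ has a nonzero $ZY^i$-coefficient. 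You instead prove the cleaner ideal-contraction statement $(P_1(X,Z), P_i(Z,Y)) \cap \mathbb{C}[X,Y] = (P_{i+1}(X,Y))$ by specializing $(X,Z)$ to the common zeros $(\zeta^k Y, \zeta^{m_k} Y)$ and factoring in the UFD $\mathbb{C}[X,Y]$; this is more conceptual, establishes a slightly stronger fact, and is closer in spirit to the Zariski-closure description of these bimodules. One small imprecision in your exposition: the nonemptiness of $\{k-1,k\} \cap \{0,\dots,i\}$ holds for every $k \in \{0, \dots, i+1\}$ independently of $d$; what the hypothesis $i < d-1$ actually buys you is the pairwise distinctness of $\zeta^0, \dots, \zeta^{i+1}$, which you do invoke correctly at the end to conclude that the linear forms $X - \zeta^k Y$ are pairwise coprime.
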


\begin{proof}
Let $f: R\otimes_{\mathbb{C}} R\cong\mathbb{C}[X,Y] \longrightarrow \mathcal{O}\bigl(s^{\leq 1}\bigr)\otimes_R \mathcal{O}\bigl(s^{\leq i}\bigr)$ be the map defined by $a\otimes b \mapsto \mathrm{pr}_1^\star(a)\otimes \mathrm{pr}_2^\star(b)$, where $\mathrm{pr}_1:\mathrm{Gr}(e,s)\rightarrow V$, $\mathrm{pr}_2: \mathrm{Gr}\bigl(e,s,\dots,s^i\bigr)\rightarrow V$ are the projections on the first factor (resp. on the second factor). It is clearly a homomorphism of graded $R$--bimodules. Using Corollary~\ref{identB}, we identify $\mathcal{O}\bigl(s^{\leq i+1}\bigr)$ with $\mathbb{C}[X,Y]$ modulo $P_{i+1}(X,Y)$ and $\mathcal{O}\bigl(s^{\leq 1}\bigr)\otimes_R\mathcal{O}\bigl(s^{\leq i}\bigr)$ with $\mathbb{C}[X,Y,Z]$ modulo $P_1(X,Z)$ and $P_i(Z,Y)$.

We show that the map $f$ factors through $\mathcal{O}\bigl(s^{\leq i+1}\bigr)$. This holds if and only if $f$~maps $P_{i+1}(X,Y)$ to zero. This is ensured by Lemma~\ref{recPkgen}, as 
$$P_{i+1}(X,Y)= c_{i+1}^1 (X,Y,Z) P_1(X,Z) + c_{i+1}^i (X,Y,Z) P_i(Z,Y).$$ 
Hence the map $f$ factors through $\mathcal{O}\bigl(s^{\leq i+1}\bigr)$, inducing a homomorphism of graded $R$--bimodules $\varphi: \mathcal{O}\bigl(s^{\leq i+1}\bigr)\longrightarrow \mathcal{O}\bigl(s^{\leq 1}\bigr)\otimes_R \mathcal{O}\bigl(s^{\leq i}\bigr)$. It remains to show that $\varphi$ is injective. To this end, consider the basis $\bigl\{1, Y, \dots, Y^{i+1}\bigr\}$ of $\mathcal{O}\bigl(s^{\leq i+1}\bigr)$ as left $R$--module (see Corollary~\ref{free_left}). A basis of $\mathcal{O}\bigl(s^{\leq 1}\bigr)\otimes_R \mathcal{O}\bigl(s^{\leq i}\bigr)$ as a left $R$--module is given by $\left\{1, Y, \dots, Y^i,Z, ZY, \dots, ZY^i\right\}$. Hence to show that the map is injective, it suffices to see that the elements $1, Y, \dots, Y^{i+1}$ of $\mathcal{O}\bigl(s^{\leq 1}\bigr)\otimes_R \mathcal{O}\bigl(s^{\leq i}\bigr)$ are $\mathbb{C}[X]$-linearly independent, that is, that $Y^{i+1}\in \mathcal{O}\bigl(s^{\leq i}\bigr)\otimes_R \mathcal{O}\bigl(s^{\leq i}\bigr)$ is not a $\mathbb{C}[X]$-linear combination of $1, Y, \dots, Y^i$. Assume that 
$$Y^{i+1}= \sum_{k=1}^{i+1} \alpha_k X^k Y^{i+1-k}$$
for some $\alpha_k\in \mathbb{C}$. This implies that $Y^{i+1}- \sum_{k=1}^{i+1} \alpha_k X^k Y^{i+1-k}=0$ in $\mathcal{O}\bigl(s^{\leq 1}\bigr)\otimes_R \mathcal{O}\bigl(s^{\leq i}\bigr)$, hence, that the polynomial $Y^{i+1}- \sum_{k=1}^{i+1} \alpha_k X^k Y^{i+1-k}\in\C[X,Y,Z]$ is equal to zero modulo $P_1(X,Z)$ and $P_i(Z,Y)$. It must therefore be of the form $Q_1 P_1(X,Z)+Q_2 P_i(Z,Y)$ for homogeneous polynomials $Q_1, Q_2\in\C[X,Y,Z]$. Since $P_i(Z,Y)$ has degree $i+1$, we must have $Q_2\in\C^\times$, and this implies that the monomial $ZY^i$ is contributed from $Q_2 P_i(Z,Y)$ with a non-zero coefficient (since it has a non-zero coefficient in $P_i(Z,Y)$ because $i<d-1$); but it cannot be contributed from $Q_1 P_1(X,Z)$ as $Q_1$ has degree $i-1$, a contradiction. Hence $Y^{i+1}$ is not a $\C[X]$--linear combination of $1, Y, \dots, Y^i$ in $\mathcal{O}\bigl(s^{\leq 1}\bigr)\otimes_R \mathcal{O}\bigl(s^{\leq i}\bigr)$, which completes the proof.     
\end{proof}

\begin{prop}\label{inj2}
Let $1\leq i< d-1$. There is an injective homomorphism of graded $R$--bimodules 
$$\psi: \mathcal{O}\bigl(s^{[1,i]}\bigr)[-2]\longrightarrow \mathcal{O}\bigl(s^{\leq 1}\bigr)\otimes_R \mathcal{O}\bigl(s^{\leq i}\bigr)$$
induced by $1 \mapsto m$ where 
$m  = \bigl( \sum_{r=0}^{i} \zeta^{r} \bigr) Z  -  \bigl( \sum_{r=0}^{i-1} \zeta^{r} \bigr)X  -  \zeta^{i} Y$,
under the identification of $\mathcal{O}\bigl(s^{\leq 1}\bigr)\ot_R \mathcal{O}\bigl(s^{\leq i}\bigr)$ with $\mathbb{C}[X,Y,Z]$ modulo $P_1(X,Z)$ and $P_i(Z,Y)$.      
\end{prop}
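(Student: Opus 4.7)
The plan is to mimic the structure of Proposition~\ref{inj1}: first define a multiplication-by-$m$ map out of $\C[X,Y]$, then show it factors through $\mathcal{O}\bigl(s^{[1,i]}\bigr)$, and finally verify injectivity by comparing left $R$-module bases. Since $m$ has polynomial degree $1$, it lies in bimodule degree $2$, so the map $\widetilde{\psi}: \C[X,Y] \to \mathcal{O}\bigl(s^{\leq 1}\bigr)\otimes_R \mathcal{O}\bigl(s^{\leq i}\bigr)$ defined by $1\mapsto m$ is a degree-$0$ graded bimodule morphism once the source is shifted by $[-2]$. Via Corollary~\ref{identB}, I identify $\mathcal{O}\bigl(s^{[1,i]}\bigr)$ with $\C[X,Y]/(Q(X,Y))$ where $Q(X,Y) := \prod_{j=1}^{i}\bigl(X-\zeta^j Y\bigr)$ differs from $P_{i-1}\bigl(\zeta^{-1}X,Y\bigr)$ only by a nonzero scalar; the map will factor through $\mathcal{O}\bigl(s^{[1,i]}\bigr)$ exactly when $m\cdot Q(X,Y) \in \bigl(P_1(X,Z), P_i(Z,Y)\bigr)$ in $\C[X,Y,Z]$.

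To verify this containment, I would exploit that $P_1(X,Z)=(X-Z)(X-\zeta Z)$ is squarefree, so the ideal $\bigl(P_1(X,Z)\bigr)$ is radical and equals $(X-Z)\cap(X-\zeta Z)$. Hence the target congruence $m\cdot Q(X,Y) \equiv \zeta^i P_i(Z,Y) \pmod{P_1(X,Z)}$ can be checked separately on the two components $V(X-Z)$ and $V(X-\zeta Z)$ of $V(P_1(X,Z))$. On $V(X-Z)$, the telescoping identity $\sum_{r=0}^{i}\zeta^r -\sum_{r=0}^{i-1}\zeta^r = \zeta^i$ collapses $m$ to $\zeta^i(Z-Y)$ while $Q(X,Y)$ becomes $\prod_{j=1}^{i}\bigl(Z-\zeta^j Y\bigr)$, giving $(m\cdot Q)|_{X=Z} = \zeta^i P_i(Z,Y)$. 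On $V(X-\zeta Z)$, an analogous simplification yields $m|_{X=\zeta Z} = Z-\zeta^i Y$ and $Q(X,Y)|_{X=\zeta Z} = \zeta^i P_{i-1}(Z,Y)$, hence $(m\cdot Q)|_{X=\zeta Z} = \zeta^i \bigl(Z-\zeta^i Y\bigr)P_{i-1}(Z,Y) = \zeta^i P_i(Z,Y)$. Both restrictions agree, so the congruence holds and $m\cdot Q(X,Y)$ indeed lies in $\bigl(P_1(X,Z),P_i(Z,Y)\bigr)$.

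For injectivity, an analogue of Corollary~\ref{free_left} presents $\mathcal{O}\bigl(s^{[1,i]}\bigr)$ as a free left $R$-module with basis $\{1,Y,\dots,Y^{i-1}\}$, and an iterated application of the same corollary gives a left $R$-module basis $\{Y^b, ZY^b : 0\leq b\leq i\}$ of $\mathcal{O}\bigl(s^{\leq 1}\bigr)\otimes_R \mathcal{O}\bigl(s^{\leq i}\bigr)$. Computing $\psi(f)$ for a general element $f=\sum_{k=0}^{i-1}Q_k(X)Y^k$ and reading off the coefficient of $ZY^k$ yields $\lambda\, Q_k(X)$, where $\lambda = \sum_{r=0}^{i}\zeta^r = (1-\zeta^{i+1})/(1-\zeta) \neq 0$ since $1\leq i+1 < d$. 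Vanishing of $\psi(f)$ therefore forces every $Q_k$ to vanish, which completes the argument.

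The technically most delicate step is the CRT-style verification in the middle paragraph: the particular coefficients appearing in $m$ are chosen precisely so that both component-wise restrictions produce the same multiple of $P_i(Z,Y)$, and careful bookkeeping of the powers of $\zeta$ is the only point requiring attention. The remaining steps follow the same template as Proposition~\ref{inj1}, with injectivity reducing to the nonvanishing of $\lambda$, which is guaranteed by the hypothesis $i<d-1$.
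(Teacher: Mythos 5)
Your proof is correct, and it takes a genuinely different route from the paper's for the key well-definedness step. The paper establishes that $m\cdot P_{i-1}\bigl(\zeta^{-1}X,Y\bigr)$ lies in the ideal $\bigl(P_1(X,Z),P_i(Z,Y)\bigr)$ by invoking the general recursive expansion of Lemma~\ref{recPkgen} (writing $P_i(Z,Y)$ as a combination of $P_1\bigl(Z,\zeta^{-1}X\bigr)$ and $P_{i-1}\bigl(\zeta^{-1}X,Y\bigr)$, then noting $P_1\bigl(Z,\zeta^{-1}X\bigr)=\zeta^{-1}P_1(X,Z)$) together with the closed formula from Lemma~\ref{recPkexpl} to recognize the coefficient $d_i^{i-1}\bigl(Z,Y,\zeta^{-1}X\bigr)$ as $m$. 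Your approach instead observes that $(P_1(X,Z))=(X-Z)\cap(X-\zeta Z)$ is radical (a product of two distinct linear forms), and verifies the stronger congruence $m\cdot Q(X,Y)\equiv\zeta^i P_i(Z,Y)\pmod{P_1(X,Z)}$ by checking equality after restricting to each of the two irreducible components $V(X-Z)$ and $V(X-\zeta Z)$; the restrictions of $m$ collapse to $\zeta^i(Z-Y)$ and $Z-\zeta^iY$ respectively, and both products telescope to $\zeta^i P_i(Z,Y)$. This is a cleaner, more self-contained argument that bypasses the machinery of Lemmas~\ref{recPkgen} and~\ref{recPkexpl} (though the paper needs Lemma~\ref{recPkgen} anyway for Proposition~\ref{inj1}); what it gives up is the explicit decomposition of $P_k$ that the paper reuses elsewhere. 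Your injectivity argument (reading off the $ZY^k$-coefficients in the left $R$-basis and using $\sum_{r=0}^{i}\zeta^r\neq 0$ when $i<d-1$) matches the paper's.
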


\begin{proof}
Using Corollary~\ref{identB}, we start by identifying $\mathcal{O}\bigl(s^{[1,i]}\bigr)$ with $\C[X,Y]$ modulo $P_{i-1}\bigl(\zeta^{-1}X,Y\bigr)$ and $\mathcal{O}\bigl(s^{\leq 1}\bigr)\otimes_R \mathcal{O}\bigl(s^{\leq i}\bigr)$ with $\mathbb{C}[X,Y,Z]$ modulo $P_1(X,Z)$ and $P_i(Z,Y)$. The homomorphism $\psi$ of graded $R$--bimodules sending $1$ to $m$ is well-defined if and only if $\psi$ maps $P_{i-1}\bigl(\zeta^{-1}X,Y\bigr)$ to zero. This holds as long as $m P_{i-1}\bigl(\zeta^{-1}X,Y\bigr)$ can be written as a $\mathbb{C}[X,Y,Z]$--linear combination of $P_1(X,Z)$ and $P_i(Z,Y)$. But by Lemma \ref{recPkgen}, we have
\begin{eqnarray*}
P_i (Z,Y) & = & c_i^1\bigl(Z, Y, \zeta^{-1}X\bigr) P_1\bigl(Z, \zeta^{-1}X\bigr) + d_i^{i-1} \bigl(Z, Y, \zeta^{-1}X\bigr)P_{i-1}\bigl( \zeta^{-1}X, Y\bigr) \\
& = &  \zeta^{-1} c_i^1\bigl(Z, Y, \zeta^{-1}X\bigr) P_1\bigl(X, Z\bigr) + d_i^{i-1} \bigl(Z, Y, \zeta^{-1}X \bigr)P_{i-1}\bigl( \zeta^{-1}X, Y\bigr), 
\end{eqnarray*} 
which concludes since
$$d_i^{i-1} \bigl(Z, Y, \zeta^{-1}X \bigr) = \left( \sum_{r=0}^{i} \zeta^{r} \right) Z - \left( \sum_{r=0}^{i-1} \zeta^{r} \right)X  -  \zeta^{i} Y=m$$
by Lemma \ref{recPkexpl}. It remains to show that the map is injective. A basis of $\mathcal{O}\bigl(s^{[1,i]}\bigr)$ as a left $R$--module is given by $\bigl\{1, Y, \dots, Y^{i-1}\bigr\}$. Indeed, since $\mathcal{O}\bigl(s^{[1,i]}\bigr)\cong \mathcal{O}(s)\otimes_R \mathcal{O}\bigl(s^{\leq i-1}\bigr)$, a basis of $\mathcal{O}\bigl(s^{[1,i]}\bigr)$ as a left $R$--module can be obtained by just taking a basis of $\mathcal{O}\bigl(s^{\leq i-1}\bigr)$ as a left $R$--module since the effect of tensoring by $\mathcal{O}(s)$ on the left just twists the multiplication, letting $X$ act by multiplication by $s^{-1}(X)=\zeta X$. Taking the image of this basis by $\psi$, we obtain $R$--linearly independent elements, since $\left\{1, Y, \dots, Y^i, Z, ZY,\dots, ZY^i\right\}$ is a basis of $\mathcal{O}\bigl(s^{\leq 1}\bigr)\otimes_R \mathcal{O}\bigl(s^{\leq i}\bigr)$ as left $R$--module. 
\end{proof}

Propositions~\ref{inj1} and~\ref{inj2} allow us to prove the following result, which is an analogue of~\cite[Proposition 4.6]{S} (where it is proven for dihedral groups) in our setting:

\begin{prop}[Soergel's Lemma]\label{decomp}
Let $1\leq i< d-1$. There is an isomorphism of graded $R$--bimodules 
$$\mathcal{O}\bigl(s^{\leq 1}\bigr)\otimes_R \mathcal{O}\bigl(s^{\leq i}\bigr) \cong \mathcal{O}\bigl(s^{\leq i+1}\bigr) \oplus \mathcal{O}\bigl(s^{[1,i]}\bigr)[-2].$$
\end{prop}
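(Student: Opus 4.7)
The plan is to combine the two injections $\varphi$ and $\psi$ from Propositions~\ref{inj1} and~\ref{inj2} into a single homomorphism
$$\Phi:=\varphi\oplus\psi:\mathcal{O}\bigl(s^{\leq i+1}\bigr)\oplus\mathcal{O}\bigl(s^{[1,i]}\bigr)[-2]\longrightarrow \mathcal{O}\bigl(s^{\leq 1}\bigr)\otimes_R\mathcal{O}\bigl(s^{\leq i}\bigr)$$
of graded $R$--bimodules, and then to prove that $\Phi$ is an isomorphism. Both the source and the target are free as left $R$--modules (by Corollary~\ref{free_left} applied to each summand of the source, and by the basis description in the proof of Proposition~\ref{inj1} for the target), so it suffices to check that $\Phi$ is surjective and that both sides have the same graded rank as left $R$--modules: then in each graded component one has a surjection between finite-dimensional vector spaces of equal dimension, hence an isomorphism of graded $R$--bimodules.

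First I would match the graded ranks. The target has basis $\bigl\{Y^k, ZY^k: 0 \leq k \leq i\bigr\}$ with graded rank $\bigl(1+v^2+\cdots+v^{2i}\bigr)\bigl(1+v^2\bigr)$. The summand $\mathcal{O}\bigl(s^{\leq i+1}\bigr)$ has graded rank $1+v^2+\cdots+v^{2(i+1)}$, while $\mathcal{O}\bigl(s^{[1,i]}\bigr)[-2]\cong\bigl(\mathcal{O}(s)\otimes_R\mathcal{O}\bigl(s^{\leq i-1}\bigr)\bigr)[-2]$ has graded rank $v^2+v^4+\cdots+v^{2i}$. These sum to $1+2v^2+\cdots+2v^{2i}+v^{2(i+1)}$, agreeing with the target.

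Next, for surjectivity, I would show that the target basis lies in $\mathrm{im}(\Phi)$. The image of $\varphi$ contains $Y^k$ for $0\leq k\leq i+1$, while the image of $\psi$ contains $mY^k=\alpha ZY^k-\beta XY^k-\zeta^i Y^{k+1}$ for $0\leq k\leq i-1$, where $\alpha:=\sum_{r=0}^i\zeta^r$ is nonzero since $i+1<d$. For $0\leq k\leq i-1$, the terms $\beta XY^k$ and $\zeta^i Y^{k+1}$ already belong to $\mathrm{im}(\varphi)$ (using the left $R$--action), so subtracting yields $\alpha ZY^k\in\mathrm{im}(\Phi)$, hence $ZY^k\in\mathrm{im}(\Phi)$. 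It remains to reach $ZY^i$. For this I would use that $Y^{i+1}\in\mathrm{im}(\varphi)$ and expand it in the target basis via the relations $P_i(Z,Y)=0$ and $P_1(X,Z)=0$. A direct expansion of $P_i(Z,Y)=\prod_{k=0}^i(Z-\zeta^k Y)$ shows that the coefficient of $ZY^i$ is $(-1)^i\zeta^{i(i+1)/2}\bar\alpha$ with $\bar\alpha:=\sum_{r=0}^i\zeta^{-r}\neq 0$, while the remaining contributions $Z^j Y^{i+1-j}$ for $j\geq 2$ reduce, via $P_1(X,Z)=0$, to $\mathbb{C}[X]$--combinations of $Y^{i+1-j}$ and $ZY^{i+1-j}$, all of which lie in $\mathrm{im}(\Phi)$ since $i+1-j\leq i-1$. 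Solving for $Y^{i+1}$ yields $Y^{i+1}=\bar\alpha ZY^i+(\text{terms already in }\mathrm{im}(\Phi))$, so $\bar\alpha ZY^i\in\mathrm{im}(\Phi)$, and thus $ZY^i\in\mathrm{im}(\Phi)$.

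The main obstacle is this last reduction of $Y^{i+1}$ to the target basis, which requires careful bookkeeping of the expansion of $P_i(Z,Y)$ together with iterated application of $P_1(X,Z)=0$ to reduce higher powers of $Z$. The arithmetic condition $i+1<d$ is essential, as it simultaneously guarantees that both $\alpha$ and $\bar\alpha$ are nonzero and can therefore be cleared to extract the desired basis elements.
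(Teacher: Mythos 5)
Your proof is correct and follows essentially the same route as the paper: combine the two injections from Propositions~\ref{inj1} and~\ref{inj2}, count graded left $R$--module ranks, and verify that the two images together generate the target, so that the sum is direct by graded dimension count. Your explicit reduction of $Y^{i+1}$ via the relations $P_i(Z,Y)=0$ and $P_1(X,Z)=0$ to extract $ZY^i$ usefully fills in a step the paper compresses into the bare assertion that the two subbimodules generate because $\sum_{r=0}^{i}\zeta^{r}\neq 0$.
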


\begin{proof}
In the former Propositions \ref{inj1} and \ref{inj2}, we have proven that the two graded $R$--bimodules $\mathcal{O}\bigl(s^{\leq i+1}\bigr) $ and $ \mathcal{O}\bigl(s^{[1,i]}\bigr)[-2]$ embed, through the maps $\varphi$ and $\psi$, into $\mathcal{O}\bigl(s^{\leq 1}\bigr)\otimes_R \mathcal{O}\bigl(s^{\leq i}\bigr)$. But the latter is a free left $R$--module of rank $2(i+1)$ with basis $\left\{1, Y, \dots, Y^i, Z, ZY,\dots, ZY^i\right\}$. It possesses two $R$--subbimodules: $\mbox{im}(\varphi)$, which is free with basis $\left\{1, Y, \dots, Y^i\right\}$, and $\mbox{im}(\psi)$, which is free with basis $\left\{m, mY,\dots, mY^i\right\}$ where $m=\bigl( \sum_{r=0}^{i} \zeta^{r} \bigr) Z  -  \bigl( \sum_{r=0}^{i-1} \zeta^{r} \bigr)X  -  \zeta^{i} Y$. These two $R$--subbimodules generate $\mathcal{O}\bigl(s^{\leq 1}\bigr)\otimes_R \mathcal{O}\bigl(s^{\leq i}\bigr)$ as a left $R$--module since $\bigl( \sum_{r=0}^{i} \zeta^{r} \bigr) \neq 0$ for $1\leq i< d-1$. Both are free of rank $i+1$, so, by rank considerations, they form a direct sum decomposition of $\mathcal{O}\bigl(s^{\leq 1}\bigr)\otimes_R \mathcal{O}\bigl(s^{\leq i}\bigr)$ as a left $R$--module. But, since they are in fact, by Propositions \ref{inj1} and \ref{inj2}, $R$--subbimodules of $\mathcal{O}\bigl(s^{\leq 1}\bigr)\otimes_R \mathcal{O}\bigl(s^{\leq i}\bigr)$ (i.e. they are both stable by multiplication by $Y$), we get that 
$$\mathcal{O}\bigl(s^{\leq 1}\bigr)\otimes_R \mathcal{O}\bigl(s^{\leq i}\bigr) \cong \mathcal{O}\bigl(s^{\leq i+1}\bigr) \oplus \mathcal{O}\bigl(s^{[1,i]}\bigr)[-2]$$
not only as a left $R$--module but as a left $R$--bimodule.
\end{proof}

\begin{remark}\label{rmq:FS}
The surjective map $R\otimes_{\mathbb{C}} R\twoheadrightarrow \mathcal{O}(W) = \mathcal{O}\bigl(s^{\leq d-1}\bigr)$ factors through $R\otimes_{R^s} R$ (see Remark~\ref{indec}). Note that $R^s=\C[X^d]$. Comparing the graded dimensions using on one hand Lemma~\ref{free_left} and on the other hand the fact that, as an $R^s$--module, we have 
$$R= R^s \oplus R^s X \oplus \cdots \oplus R^s X^{d-1},$$
we get an isomorphism of graded $R$--bimodules
\begin{equation}\label{isoFS}
\mathcal{O}(W)\cong R\otimes_{R^s} R.
\end{equation}
This implies in particular that
$$\mathcal{O}(W) \otimes_R \mathcal{O}(W) \cong \mathcal{O}(W) \oplus \mathcal{O}(W)[-2] \oplus \cdots \oplus \mathcal{O}(W)[-2(d-1)].$$ 
Hence the Karoubi envelope of the additive monoidal category generated by $ \mathcal{O}(W) $ is a full subcategory of $\mathcal{B}_W$ which possesses, up to isomorphism and grading shifts, only two indecomposable objects, $R$ and $\mathcal{O}(W)$, as already mentioned in the introduction. 
\end{remark}

The next lemma is the analogue of \cite[Lemma 4.5 (1)]{S}:

\begin{lemma}\label{lem:FS}
There is an isomorphism of graded $R$--bimodules 
$$\mathcal{O}\bigl(s^{\leq 1}\bigr)\otimes_R \mathcal{O}(W)\cong \mathcal{O}(W)\oplus \mathcal{O}(W)[-2].$$ 
\end{lemma}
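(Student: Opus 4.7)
The plan is to tensor the short exact sequence
\[ 0 \longrightarrow \mathcal{O}(s)[-2] \longrightarrow \mathcal{O}(s^{\leq 1}) \longrightarrow \mathcal{O}(e) \longrightarrow 0 \]
(the $i = 1$ case of the opening proposition of this section) on the right over $R$ with the bimodule $\mathcal{O}(W)$. First, I observe that by Corollary~\ref{free_left} (applied with $i = d-1$), $\mathcal{O}(W)$ is free, hence flat, as a left $R$-module, so tensoring on the right preserves exactness. Combined with the identifications $\mathcal{O}(s) \otimes_R \mathcal{O}(W) \cong \mathcal{O}(W)$ from~\eqref{isosFS} and $\mathcal{O}(e) \otimes_R \mathcal{O}(W) = R \otimes_R \mathcal{O}(W) \cong \mathcal{O}(W)$, this produces the short exact sequence of graded $R$-bimodules
\[ 0 \longrightarrow \mathcal{O}(W)[-2] \longrightarrow \mathcal{O}(s^{\leq 1}) \otimes_R \mathcal{O}(W) \longrightarrow \mathcal{O}(W) \longrightarrow 0. \]

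To split this sequence, I will exhibit a bimodule section $\sigma: \mathcal{O}(W) \to \mathcal{O}(s^{\leq 1}) \otimes_R \mathcal{O}(W)$ induced by $1 \mapsto 1 \otimes 1$. Using Corollary~\ref{identB} and~\eqref{Pd-1}, I identify $\mathcal{O}(W)$ with $\C[X,Y]/(X^d - Y^d)$ and $\mathcal{O}(s^{\leq 1}) \otimes_R \mathcal{O}(W)$ with $\C[X,Y,Z]/(P_1(X,Z), P_{d-1}(Z,Y))$. Well-definedness of $\sigma$ then reduces to the membership $X^d - Y^d \in (P_1(X,Z), P_{d-1}(Z,Y))$, which I verify by writing $X^d - Y^d = (X^d - Z^d) + (Z^d - Y^d)$: the second summand is exactly $P_{d-1}(Z,Y)$, and the first is divisible by $P_1(X,Z) = (X-Z)(X-\zeta Z)$ since these two linear factors both appear in the complete factorization $X^d - Z^d = \prod_{k=0}^{d-1}(X - \zeta^k Z)$. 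Composing $\sigma$ with the surjection recovers the identity on $\mathcal{O}(W)$, so the sequence splits and yields the desired decomposition.

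The main subtlety to flag is that the argument of Proposition~\ref{decomp} does not directly extend to the boundary case $i = d - 1$: the element $m$ of Proposition~\ref{inj2} involves the coefficient $\sum_{r=0}^{d-1} \zeta^r$, which vanishes precisely here, and moreover $\mathcal{O}(s^{[1, d-1]})$ is not isomorphic to $\mathcal{O}(W)$. The short-exact-sequence approach sketched above circumvents this obstacle, reducing the whole statement to the elementary polynomial divisibility above.
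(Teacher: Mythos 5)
Your proof is correct, and it follows a genuinely different route from the paper's. The paper's proof is an abstract-nonsense argument in the style of \cite[Lemma 4.5(1)]{S}: it uses the isomorphism $\mathcal{O}(W)\cong R\otimes_{R^s}R$ of Remark~\ref{rmq:FS} to rewrite $\mathcal{O}\bigl(s^{\leq 1}\bigr)\otimes_R \mathcal{O}(W)$ as $\mathcal{O}\bigl(s^{\leq 1}\bigr)\otimes_{R^s} R$, and then exploits the fact that the left-$R$-module decomposition $\mathcal{O}\bigl(s^{\leq 1}\bigr)=R\cdot 1\oplus R\cdot Z$ is in fact a decomposition of $(R,R^s)$-bimodules (since the left and right $R^s$-actions agree, by Remark~\ref{indec}); tensoring this over $R^s$ with $R$ immediately gives the two summands. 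You instead tensor the short exact sequence from the opening proposition of the section with $\mathcal{O}(W)$, invoke flatness of $\mathcal{O}(W)$ as a left $R$-module (a valid appeal to Corollary~\ref{free_left} with $i=d-1$), and split the resulting sequence by the explicit section $1\mapsto 1\otimes 1$, whose well-definedness reduces to the easy ideal membership $X^d-Y^d\in\bigl(P_1(X,Z),\,P_{d-1}(Z,Y)\bigr)$. Your approach is more computational but avoids Isomorphism~\eqref{isoFS} entirely, and your closing remark is a useful one: the failure of the splitting mechanism of Propositions~\ref{inj1}--\ref{decomp} at $i=d-1$ is precisely the vanishing of $\sum_{r=0}^{d-1}\zeta^r$, and the short-exact-sequence argument sidesteps this because the section $1\mapsto 1\otimes 1$ needs no normalization. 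The paper's route is shorter once~\eqref{isoFS} is in hand; yours is self-contained given only the exact sequence and the explicit generator $P_{d-1}=X^d-Y^d$.
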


\begin{proof}
As a left $R$--module, a basis of $\mathcal{O}\bigl(s^{\leq 1}\bigr)$, identified with $\mathbb{C}[X,Z]$ modulo $P_{1}(X,Z)$, is given by $\{1, Z\}$. But since for all $a\in\mathcal{O}\bigl(s^{\leq 1}\bigr)$ and all $b\in R^s$, we have $ab=ba$, the two left $R$--submodules generated respectively by each of these two elements lie in $R$--$\mathrm{mod}$--$R^s$. Together with Isomorphism \eqref{isoFS}, it follows that 
$$\mathcal{O}\bigl(s^{\leq 1}\bigr)\otimes_R \mathcal{O}(W)\cong \mathcal{O}\bigl(s^{\leq 1}\bigr)\otimes_{R^s} R\cong (R\otimes_{R^s} R) \oplus (R \otimes_{R^s} R)[-2],$$
which concludes. 
\end{proof}

\begin{remark}\label{decompOWA}
In the exact same way, one can prove that there are isomorphisms of graded $R$--bimodules 
$$\mathcal{O}\bigl(s^{\leq i}\bigr) \otimes_R \mathcal{O}\bigl(s^{\leq 1}\bigr) \cong \mathcal{O}\bigl(s^{\leq i+1}\bigr) \oplus \mathcal{O}\bigl(s^{[1,i]}\bigr)[-2]$$
for all $1\leq i< d-1$, and
$$ \mathcal{O}(W) \otimes_R \mathcal{O}\bigl(s^{\leq 1}\bigr) \cong \mathcal{O}(W)\oplus \mathcal{O}(W)[-2].$$

In particular, it follows that
$$\mathcal{O}\bigl(s^{\leq 1}\bigr) \otimes_R \mathcal{O}\bigl(s^{\leq i}\bigr) \cong \mathcal{O}\bigl(s^{\leq i}\bigr) \otimes_R \mathcal{O}\bigl(s^{\leq 1}\bigr)$$
for all $1\leq i \leq d-1$; which, together with Corollary \ref{cor:isos}, also implies that
$$\mathcal{O}\bigl(s^{\leq 1}\bigr)\otimes_R \mathcal{O}(A)\cong \mathcal{O}(A) \otimes_R \mathcal{O}\bigl(s^{\leq 1}\bigr)$$
for every cyclically connected subset $A\subseteq W$.
\end{remark}

Recall that $\mathcal{B}_W$ is the category which is generated (as additive, graded, monoidal, Karoubian and stable by grading shifts category) by $\mathcal{O}\bigl(s^{\leq 1})=\mathcal{O}(e,s)$. We now have all the required tools to classify the indecomposable objects in $\mathcal{B}_W$:

\begin{thm}[Classification of indecomposable objects in $\mathcal{B}_W$]\label{classif}
The indecomposable objects in $\mathcal{B}_W$ are, up to isomorphism and grading shift, given by $$\left\{ \mathcal{O}(A)~|~A\subseteq W \text{ is cyclically connected} \right\}.$$
\end{thm}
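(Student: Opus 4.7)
The plan is to establish two containments. First, that each $\mathcal{O}(A)$ with $A$ cyclically connected really does occur (up to shift) as an object of $\mathcal{B}_W$; and second, that every indecomposable object of $\mathcal{B}_W$ is of this form. Indecomposability of each $\mathcal{O}(A)$ was already checked in Remark~\ref{indec}, and the Krull-Schmidt property recalled in Section~\ref{sec:bim} then identifies isomorphism classes.

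For the first containment, I would induct on $i$ to show that $\mathcal{O}\bigl(s^{\leq i}\bigr)$ is a direct summand (up to shift) of $\mathcal{O}\bigl(s^{\leq 1}\bigr)^{\otimes i}$. The base case $i=1$ is trivial, and Proposition~\ref{decomp} supplies the inductive step for $1 \leq i < d-1$; the case $i = d-1$, giving $\mathcal{O}(W)$, arises at the top of this same induction. As a by-product, $\mathcal{O}(s)$ itself appears as a summand of $\mathcal{O}\bigl(s^{\leq 1}\bigr)^{\otimes 2}$, so every $\mathcal{O}\bigl(s^i\bigr)\cong \mathcal{O}(s)^{\otimes i}$ lies in $\mathcal{B}_W$ by \eqref{isostwists}. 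An arbitrary cyclically connected subset has the form $A = s^{[i, i+j]}$, and Lemma~\ref{lem:isos} gives $\mathcal{O}(A) \cong \mathcal{O}\bigl(s^i\bigr) \otimes_R \mathcal{O}\bigl(s^{\leq j}\bigr)$, so every such $\mathcal{O}(A)$ is then an object of $\mathcal{B}_W$.

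For the second containment, I would show by induction on $n$ that $\mathcal{O}\bigl(s^{\leq 1}\bigr)^{\otimes n}$ decomposes as a direct sum of bimodules of the form $\mathcal{O}(A)[k]$ with $A$ cyclically connected. It suffices to check that $\mathcal{O}\bigl(s^{\leq 1}\bigr) \otimes_R \mathcal{O}(A)$ enjoys such a decomposition for any cyclically connected $A$. Writing $A = s^{[i, i+j]}$, Corollary~\ref{cor:isos} lets me commute $\mathcal{O}\bigl(s^i\bigr)$ past $\mathcal{O}\bigl(s^{\leq 1}\bigr)$, after which Proposition~\ref{decomp} (when $j < d-1$) or Lemma~\ref{lem:FS} (when $j = d-1$) breaks the remaining tensor product into a direct sum of two rings of regular functions over cyclically connected subsets, shifted appropriately. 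Tensoring back with $\mathcal{O}\bigl(s^i\bigr)$ and reapplying Lemma~\ref{lem:isos} produces the desired decomposition. Since $\mathcal{B}_W$ is Karoubian and its objects are, up to shift, direct summands of iterated tensor powers of $\mathcal{O}\bigl(s^{\leq 1}\bigr)$, Krull-Schmidt now forces every indecomposable object to be isomorphic to some $\mathcal{O}(A)[k]$.

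The main obstacle here is really just careful bookkeeping rather than any new technical ingredient, since the decomposition lemmas (Proposition~\ref{decomp}, Lemma~\ref{lem:FS}, Remark~\ref{decompOWA}) together with the commutativity statements (Corollary~\ref{cor:isos} and Lemma~\ref{lem:isos}) already do the heavy lifting. The only delicate point is ensuring that every cyclically connected subset is reached (not just those of the form $s^{\leq j}$), which is exactly where Lemma~\ref{lem:isos} intervenes to translate between $\mathcal{O}\bigl(s^{[i, i+j]}\bigr)$ and $\mathcal{O}\bigl(s^i\bigr) \otimes_R \mathcal{O}\bigl(s^{\leq j}\bigr)$.
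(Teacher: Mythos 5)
Your proof is correct and follows essentially the same route as the paper's: it rests on the same ingredients (Proposition~\ref{decomp}, Lemma~\ref{lem:FS}, Remark~\ref{decompOWA}, Corollary~\ref{cor:isos}, Lemma~\ref{lem:isos}, Remark~\ref{indec}, and Krull-Schmidt) assembled in the same two-containment structure. The only minor difference is that for the second containment you induct on the tensor power and only decompose $\mathcal{O}\bigl(s^{\leq 1}\bigr)\otimes_R\mathcal{O}(A)$, whereas the paper establishes the more general claim that $\mathcal{O}(A)\otimes_R\mathcal{O}(B)$ decomposes into cyclically connected pieces for arbitrary cyclically connected $A,B$ (by induction on $|B|$) — your lighter version suffices for the classification itself.
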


\begin{proof}
Firstly, recall from Remark~\ref{indec} that $\mathcal{O}(A)$ is indecomposable as graded $R$--bimodule whenever $A\subseteq W$. So we just need to check that
\begin{enumerate}
\item\label{item1classif} for any two cyclically connected subsets $A, B\subseteq W$, the graded $R$--bimodule $\mathcal{O}(A) \otimes_R \mathcal{O}(B)$ decomposes in a direct sum of various (possibly shifted) $\mathcal{O}(C)$, with all $C\subseteq W$ cyclically connected;
\item\label{item2classif} for any cyclically connected subset $A\subseteq W$, the graded $R$--bimodule $\mathcal{O} (A)$ belongs to $\mathcal{B}_W$, i.e., it occurs as an indecomposable summand of some tensor power of $\mathcal{O}\bigl(s^{\leq 1}\bigr)$. 
\end{enumerate}

Let $A, B\subseteq W$ be cyclically connected. By definition, there are $i,j,k,l \in \left\{0,\cdots,d-1\right\}$ such that $A = \{s^i, s^{i+1}, \dots, s^{i+j}\}$ and $B = \{s^k, s^{k+1}, \dots, s^{k+l}\}$. Lemma~\ref{lem:isos} implies that
\begin{equation}\label{isocycs}
\mathcal{O} (A) \otimes_R \mathcal{O}(B) \cong \mathcal{O} \bigl(s^{i}\bigr) \otimes_R \mathcal{O} \bigl(s^{\leq j}\bigr) \otimes_R \mathcal{O} \bigl(s^{\leq l}\bigr) \otimes_R \mathcal{O} \bigl(s^{k}\bigr).
\end{equation}
We prove \eqref{item1classif} by induction on $|B|=l+1$. If $l=0$, the isomorphism \eqref{isocycs} reduces to
\begin{equation*}
\mathcal{O} (A) \otimes_R \mathcal{O}(B ) \cong \mathcal{O} \bigl(s^{i}\bigr) \otimes_R  \mathcal{O} \bigl(s^{\leq j}\bigr) \otimes_R \mathcal{O} \bigl(s^{k}\bigr) \cong \mathcal{O} \bigl(s^{[i+k,i+k+j]}\bigr),
\end{equation*}
where the last isomorphism follows from Lemma \ref{lem:isos} again. Hence \eqref{item1classif} holds in that case. Now let $l\geq 1$. If $j=0$, then \eqref{item1classif} can be proven exactly as in the base case of the induction. So suppose that $1 \leq j \leq d-1$. By Proposition \ref{decomp}, we know that $\mathcal{O} (A) \otimes_R \mathcal{O}(B )$ is a direct summand of 
$$\mathcal{O} \bigl(s^{i}\bigr) \otimes_R \mathcal{O} \bigl(s^{\leq j}\bigr) \otimes_R \mathcal{O} \bigl(s^{\leq 1}\bigr) \otimes_R \mathcal{O} \bigl(s^{\leq l-1}\bigr) \otimes_R \mathcal{O} \bigl(s^{k}\bigr),$$
therefore, by the Krull-Schmidt property, it suffices to show that this graded $R$--bimodule is a direct sum of various $\mathcal{O}(C)$ with $C\subseteq W$ cyclically connected. But it decomposes, using Remark \ref{decompOWA}, as
\begin{multline*}
\mathcal{O} \bigl(s^{i}\bigr) \otimes_R \mathcal{O} \bigl(s^{\leq j+1}\bigr)  \otimes_R \mathcal{O} \bigl(s^{\leq l-1}\bigr) \otimes_R \mathcal{O} \bigl(s^{k}\bigr)\\
\oplus \mathcal{O} \bigl(s^{i+1}\bigr) \otimes_R \mathcal{O} \bigl(s^{\leq j-1}\bigr) \otimes_R \mathcal{O} \bigl(s^{\leq l-1}\bigr) \otimes_R \mathcal{O} \bigl(s^{k}\bigr)[-2]
\end{multline*}
if $j<d-1$, or as
\begin{multline*}
\mathcal{O} \bigl(s^{i}\bigr) \otimes_R \mathcal{O} \bigl(s^{\leq d-1}\bigr)  \otimes_R \mathcal{O} \bigl(s^{\leq l-1}\bigr) \otimes_R \mathcal{O} \bigl(s^{k}\bigr)\\
\oplus \mathcal{O} \bigl(s^{i}\bigr) \otimes_R \mathcal{O} \bigl(s^{\leq d-1}\bigr) \otimes_R \mathcal{O} \bigl(s^{\leq l-1}\bigr) \otimes_R \mathcal{O} \bigl(s^{k}\bigr)[-2]
\end{multline*}
if $j=d-1$. In either case, we can apply the induction hypothesis to both of the terms appearing in the decomposition, and this concludes the proof of~\eqref{item1classif}. 

To prove \eqref{item2classif}, we first observe that $\mathcal{O} (s) \in \mathcal{B}_W$: this follows from Proposition \ref{decomp} with $i=1$. We then proceed by induction on~$|A|$. If $|A|=1$, then there is $i\in \{0, \cdots, d-1\}$ such that $A=\{s^i\}$, and $\mathcal{O}\bigl(s^i\bigr)=\mathcal{O}(s)^{\otimes i}$ by \eqref{isostwists}. As $\mathcal{O}(s)\in\mathcal{B}_W$, this implies that $\mathcal{O}(A)\in \mathcal{B}_W$. 
If $|A|>1$, there are $i\in \{0, \cdots, d-1\}$ and $j\in \{1, \cdots, d-1\}$ such that $A = \{s^i, s^{i+1}, \dots, s^{i+j}\}$; hence, by Lemma \ref{lem:isos}, we have 
$$ \mathcal{O} (A) \cong \mathcal{O} \bigl(s^i\bigr) \otimes_R  \mathcal{O} \bigl(s^{\leq j}\bigr)$$ 
which, by Proposition \ref{decomp}, is a direct summand of $\mathcal{O} \bigl(s^i\bigr) \otimes_R  \mathcal{O} \bigl(s^{\leq 1}\bigr) \otimes_R  \mathcal{O} \bigl(s^{\leq j-1}\bigr)$. But by induction, the three factors of this tensor product lie in $\mathcal{B}_W$. This completes the proof of \eqref{item2classif} as $\mathcal{B}_W$ is Karoubian.
\end{proof}

We do not know how to parametrize the indecomposable objects in the bigger category generated by the $\mathcal{O}\bigl(e, s^i\bigr)$, which are attached to subsets which are not cyclically connected except for $i=1$ and $i=-1$. Since $s^i$ is a reflection whenever $i$ is such that $s^i\neq e$, it could make sense as well to take this category as the analogue of the Soergel category. It seems however that the Grothendieck ring will be much bigger, and it is not even clear that it has finite rank as a $\mathbb{Z}[v, v^{-1}]$--module.   

\section{Homomorphisms between indecomposable objects}

In this section, we give a basis for the morphism spaces between indecomposable objets in $\mathcal{B}_W$. Given $B, B'\in\mathcal{B}_W$ we set 
$$\mathrm{Hom}(B, B')=\bigoplus_{i\in\mathbb{Z}} \mathrm{Hom}_{\mathcal{B}_W} (B, B'[i]).$$
Recall that morphisms in $\mathcal{B}_W$ are morphisms of graded $R$-bimodules (i.e., degree zero maps). 

\begin{prop}
Let $A, B$ be cyclically connected subsets of $W$. Let $g_{A,B}:= \prod_{\left\{i~|~s^i \in B\backslash A\right\}} (X-\zeta^i Y)$, which we view inside $\mathcal{O}(B)$. Then $\mathrm{Hom}(\mathcal{O}(A), \mathcal{O}(B))$ is a free right $R$--module of rank $|A\cap B|$ with basis given by the maps which send $1$ to $$g_{A,B}, X g_{A,B},\dots, X^{|A\cap B|-1}g_{A,B}.$$ 
\end{prop}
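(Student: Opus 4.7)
The plan is to combine the quotient description of $\mathcal{O}(A)$ and $\mathcal{O}(B)$ from Corollary~\ref{identB} with a coprimality argument in the polynomial ring $\C[X,Y]$, and then read off the basis from a standard monomial basis.

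First I would observe that $\mathcal{O}(A)\cong\C[X,Y]/I_A$ is cyclically generated as an $R$--bimodule by the class of $1$, so every bimodule homomorphism $\phi:\mathcal{O}(A)\to\mathcal{O}(B)$ is determined by $f:=\phi(1)\in\mathcal{O}(B)$, and such an $f$ does arise from a morphism precisely when $I_A\cdot f=0$ in $\mathcal{O}(B)$, where $\C[X,Y]$ acts on $\mathcal{O}(B)$ by left multiplication by $X$ and right multiplication by $Y$. The ideal $I_A$ is principal; unwinding the definition of $P_A$ in Corollary~\ref{identB} shows that its generator equals, up to a nonzero scalar, the polynomial $Q_A:=\prod_{s^i\in A}(X-\zeta^i Y)$. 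The condition therefore becomes $Q_A\cdot f=0$ in $\mathcal{O}(B)$. Under the identification $\phi\leftrightarrow\phi(1)$, the right $R$--module structure on $\mathrm{Hom}(\mathcal{O}(A),\mathcal{O}(B))$ corresponds to the natural right $R$--action on the annihilator $\mathrm{Ann}_{\mathcal{O}(B)}(Q_A)\subseteq\mathcal{O}(B)$, so it suffices to exhibit the claimed basis inside the latter.

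The heart of the proof is a coprimality step. Factoring $Q_A=Q_{A\cap B}\cdot Q_{A\setminus B}$ and $Q_B=Q_{A\cap B}\cdot g_{A,B}$, lifting $f$ to $\tilde f\in\C[X,Y]$, and cancelling the common factor $Q_{A\cap B}$, the condition $Q_A\tilde f\in(Q_B)$ becomes $g_{A,B}\mid Q_{A\setminus B}\cdot\tilde f$. Since $g_{A,B}$ and $Q_{A\setminus B}$ are products of linear forms $(X-\zeta^i Y)$ indexed by the disjoint sets $B\setminus A$ and $A\setminus B$, they are coprime in the unique factorization domain $\C[X,Y]$; this forces $g_{A,B}\mid\tilde f$. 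Hence $\mathrm{Ann}_{\mathcal{O}(B)}(Q_A)=g_{A,B}\cdot\mathcal{O}(B)$.

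To finish, I would check that multiplication by $g_{A,B}$ is a right $R$--module isomorphism $\C[X,Y]/(Q_{A\cap B})\xrightarrow{\sim}g_{A,B}\cdot\mathcal{O}(B)$: well-definedness holds since $g_{A,B}\cdot Q_{A\cap B}=Q_B$ vanishes in $\mathcal{O}(B)$, and injectivity follows by cancelling the nonzero divisor $g_{A,B}$ in the integral domain $\C[X,Y]$. Because $Q_{A\cap B}$ is homogeneous of $X$--degree $|A\cap B|$ with unit leading coefficient in $X$, the argument of Corollary~\ref{free_left} applied with the roles of $X$ and $Y$ swapped shows that $\C[X,Y]/(Q_{A\cap B})$ is free as a right $R$--module with basis $\{1,X,\dots,X^{|A\cap B|-1}\}$; transporting through the isomorphism above yields the claimed basis. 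The degenerate case $A\cap B=\emptyset$ is consistent, since then $Q_{A\cap B}=1$, the basis is empty, and $g_{A,B}=Q_B$ vanishes in $\mathcal{O}(B)$. The main obstacle is pure bookkeeping — keeping straight which of the two embeddings $\C[X]\hookrightarrow\C[X,Y]$ encodes the left versus the right $R$--action, and identifying $P_A$ with $Q_A$ up to a unit — while the coprimality step that drives the argument is essentially automatic from the disjointness of $B\setminus A$ and $A\setminus B$.
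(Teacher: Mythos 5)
Your proposal is correct and takes essentially the same route as the paper: reduce a morphism $\mathcal{O}(A)\to\mathcal{O}(B)$ to a divisibility condition on a lift of $\phi(1)$ in $\C[X,Y]$, use that $g_{A,B}$ and the product of factors over $A\setminus B$ share no linear factor to conclude the image of $\phi\mapsto\phi(1)$ is exactly $g_{A,B}\cdot\mathcal{O}(B)$, and read off a right $R$--basis from the fact that the leftover factor $\prod_{s^i\in A\cap B}(X-\zeta^i Y)$ is monic of degree $|A\cap B|$ in $X$. Your version makes explicit the coprimality step that the paper only asserts, and the intermediate isomorphism $\C[X,Y]/(Q_{A\cap B})\xrightarrow{\sim}g_{A,B}\cdot\mathcal{O}(B)$ is a clean way to package the freeness claim, but the underlying argument is the same.
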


\begin{proof}
Note that since $\mathcal{O}(A)$ is cyclic, every homomorphism of bimodules $\mathcal{O}(A)\rightarrow \mathcal{O}(B)$ is fully determined by its value on $1$. Let $Q(X,Y)$ be any homogeneous polynomial in $\mathbb{C}[X,Y]$. The map $1\mapsto Q(X,Y)$ defines a homomorphism of graded $R$--bimodules if and only if $Q(X,Y)$ is killed by the action of $P_A(X,Y)$, that is, if and only if $Q(X,Y)P_A(X,Y)$ is a multiple of $P_B(X,Y)$. 

It follows that $Q(X,Y)$ has to be a multiple of $g_{A,B}$ (viewed as a polynomial in $\mathbb{C}[X,Y]$). Hence $\mathrm{Hom}(\mathcal{O}(A), \mathcal{O}(B))$ is generated (as bimodule) by $g_{A,B}$. Now we have 
$$P_B(X,Y)=g_{A,B} \left(\prod_{\left\{i~|~s^i \in A\cap B\right\}} (X-\zeta^i Y)\right),$$
and since $P_B(X,Y)$ generates the ideal $I_B$, we have that the elements $g_{A,B}, X g_{A,B},\dots, X^{|A\cap B|-1}g_{A,B}$ are $\mathbb{C}[Y]$-linearly independent in $\mathcal{O}(B)$ and that every polynomial $Q(X,Y)$ which is a multiple of $g_{A,B}$ has an image in $\mathcal{O}(B)$ which is a $\mathbb{C}[Y]$--linear combination of these elements. Indeed, since $P_B(X,Y)=0$ in $\mathcal{O}(B)$, the polynomial $X^{|A\cap B|} g_{A,B}$, and hence all $X^{i} g_{A,B}$ for any $i\geq |A\cap B|$, can be expressed as a $\mathbb{C}[Y]$--linear combination of the elements above. 
\end{proof}

\section{Presentations by generators and relations of the Grothendieck ring}

Recall that $A_W$ denotes the $\mathbb{Z}[v, v^{-1}]$--algebra defined as the split Grothendieck ring of $\mathcal{B}_W$. It follows immediately from Theorem \ref{classif} and the fact that there are $d (d-1)+1$ cyclically connected subsets of $W$ that $A_W$ is a free $\mathbb{Z}[v, v^{-1}]$--module of rank $d (d-1)+1$. Abusing notation and writing $s$ for $\langle \mathcal{O}(s) \rangle$ and $C_i$ for $\langle\mathcal{O}\bigl( s^{\leq i} \bigr)[i]\rangle$ ($1\leq i \leq d-1$), we get the following presentation of $A_W$ by generators and relations:

\begin{prop}[Presentation of the Grothendieck ring]\label{pres1}
The algebra $A_W$ is generated by $s$ and $C_1, \dots, C_{d-1}$ with relations
$$\begin{cases}
s^d=1,\\
C_i C_j=C_j C_i \ \forall i,j\text{ and } s C_i= C_i s \ \forall i,\\
C_1 C_i=C_{i+1} + s C_{i-1} \ \forall i=1,\dots, d-2,\\
C_1 C_{d-1}= \bigl(v+v^{-1}\bigr) C_{d-1},\\
s C_{d-1}= C_{d-1},
\end{cases}$$
with the convention that $C_0 := 1$. In particular, $A_W$ is commutative. 
\end{prop}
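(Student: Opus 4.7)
The plan is to verify that all the listed relations hold in $A_W$, thereby producing a surjective $\mathbb{Z}[v,v^{-1}]$--algebra morphism from the abstract algebra $\tilde{A}_W$ with the stated presentation onto $A_W$, and then to bound the $\mathbb{Z}[v,v^{-1}]$--rank of $\tilde{A}_W$ from above by $d(d-1)+1$. Since $A_W$ is free of rank exactly $d(d-1)+1$ by Theorem~\ref{classif}, this will force the surjection to be an isomorphism and, in particular, yield commutativity of $A_W$ from that of $\tilde{A}_W$.

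First I would check each relation in $A_W$. The relation $s^d=1$ is immediate from iterating the isomorphism $\mathcal{O}(s^i)\otimes_R\mathcal{O}(s^j)\cong\mathcal{O}(s^{i+j})$ of~\eqref{isostwists}. Commutativity of the $C_i$ with one another, and of $s$ with each $C_i$, follows from Corollary~\ref{cor:isos} together with the commutativity of $\mathcal{O}(s^{\leq 1})$ with each $\mathcal{O}(A)$ recorded in Remark~\ref{decompOWA}. The recursion $C_1C_i=C_{i+1}+sC_{i-1}$ for $1\leq i\leq d-2$ is Soergel's Lemma (Proposition~\ref{decomp}) translated into the Grothendieck ring: with the convention $\langle M[1]\rangle=v\langle M\rangle$ and the identification $\mathcal{O}(s^{[1,i]})\cong\mathcal{O}(s)\otimes_R\mathcal{O}(s^{\leq i-1})$ from Lemma~\ref{lem:isos}, one rewrites the class of $(\mathcal{O}(s^{\leq 1})\otimes_R\mathcal{O}(s^{\leq i}))[i+1]$ as $C_{i+1}+sC_{i-1}$. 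Similarly, $C_1C_{d-1}=(v+v^{-1})C_{d-1}$ comes from Lemma~\ref{lem:FS} and $sC_{d-1}=C_{d-1}$ from the second isomorphism in~\eqref{isosFS}.

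The main work is the rank bound. Using $C_1C_i=C_{i+1}+sC_{i-1}$ for $i=1,\ldots,d-2$, each $C_j$ can be rewritten inductively as a polynomial in $s$ and $C_1$, so $\tilde{A}_W$ is generated as an algebra by $s$ and $C_1$. I would then introduce the Chebyshev-like elements $U_0=1$, $U_1=C_1$, $U_{k+1}=C_1U_k-sU_{k-1}$ inside $\tilde{A}_W$, so that $U_k=C_k$ for $0\leq k\leq d-1$. Applying the top relation $C_1C_{d-1}=(v+v^{-1})C_{d-1}$ yields $U_d=(v+v^{-1})U_{d-1}-sU_{d-2}$, and an induction on $k$ shows that every $U_k$ for $k\geq d$ lies in the $\mathbb{Z}[v,v^{-1}][s]/(s^d-1)$--linear span of $U_0,\ldots,U_{d-1}$. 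Combined with $s^d=1$ and commutativity, the set $\{s^iU_j:0\leq i,j\leq d-1\}$ spans $\tilde{A}_W$ and has cardinality $d^2$. Finally, the relation $sC_{d-1}=C_{d-1}$ collapses all $s^iU_{d-1}$ to $U_{d-1}$, cutting the spanning set down to $d^2-(d-1)=d(d-1)+1$ elements.

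The main obstacle I anticipate is the inductive step for $U_k$ with $k\geq d$: since the recursion $U_{k+1}=C_1U_k-sU_{k-1}$ reintroduces products $C_1U_j$, these must be re-expanded via the $C_1C_i$ relations (including the top ones) to remain inside the span of $U_0,\ldots,U_{d-1}$. I would first compute $U_{d+1}$ and $U_{d+2}$ explicitly to make the reduction pattern transparent, paying careful attention to the simultaneous use of both top relations $C_1C_{d-1}=(v+v^{-1})C_{d-1}$ and $sC_{d-1}=C_{d-1}$; it is precisely their combination that truncates the otherwise infinite Chebyshev module down to a finite quotient of the correct rank.
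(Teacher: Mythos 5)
Your proposal is correct and follows the same overall strategy as the paper's proof: verify the relations in $A_W$ to obtain a surjection $\tilde{A}_W \twoheadrightarrow A_W$, then bound the rank of $\tilde{A}_W$ from above by $d(d-1)+1$ and invoke the known rank of $A_W$ from Theorem~\ref{classif}. The only difference is in how the rank bound is packaged. The paper argues directly: since $\tilde{A}_W$ is generated over $\mathbb{Z}[v,v^{-1}]$ by $s$ and $C_1$, it suffices to check that the $\mathbb{Z}[v,v^{-1}]$--span of $\{s^iC_j : (i,j)\in\Sigma\}$ is closed under multiplication by $s$ (first and last relations) and by $C_1$ (first, third and fourth relations). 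You instead introduce the Chebyshev-like elements $U_k$ and argue that all $U_k$ with $k\geq d$ fall back into the $\mathbb{Z}[v,v^{-1}][s]/(s^d-1)$--span of $U_0,\dots,U_{d-1}$; this is a slightly more roundabout route to the same closure property, since the inductive step $U_{k+1}=C_1U_k-sU_{k-1}$ reduces precisely to checking that $C_1$ preserves the span of the $U_j$ (which is how the paper phrases it in the first place), and the cardinality cut from $d^2$ to $d(d-1)+1$ via $sC_{d-1}=C_{d-1}$ matches the paper's definition of $\Sigma$. Your anticipated difficulty about re-expanding $C_1U_j$ using the top relations is real but entirely routine; the paper sidesteps the explicit computation by stating the closure property abstractly. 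Incidentally, the polynomials $U_k$ you define are exactly the $Q_k(C_1,s)$ that the paper introduces later in Proposition~\ref{pres2} and Section~\ref{ss}, so this repackaging is consistent with the paper's own subsequent development.
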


\begin{proof}
Every indecomposable object of $\mathcal{B}_W$ is, up to grading shift, of the form $\mathcal{O}(A)$ for some cyclically connected $A\subseteq W$. By definition, it means that $A = s^{[i,i+j]}$ for some $i,j\in \left\{0,\cdots,d-1\right\}$ and we have that $\langle \mathcal{O}(A)[j] \rangle = s^iC_j$. Hence the algebra $A_W$ is generated over $\mathbb{Z}[v, v^{-1}]$ by $s$ and $C_1, \dots C_{d-1}$.

We now show that the above relations hold in $A_W$. The first relation follows from~\eqref{isostwists} and $\mathcal{O}(e)\cong R$. The third and fourth relations are given respectively by Proposition~\ref{decomp} and Lemma~\ref{lem:FS}. The last relation follows from~\eqref{isosFS}. To prove the second set of relations, let us first observe that $s$ is central in $A_W$ (by Corollary~\ref{cor:isos}) and so is $C_1$ (by Remark~\ref{decompOWA}). Moreover, it follows from the third relation and the fact that $C_1$ and $s$ commute to each other that every $C_i$ can be expressed as a polynomial in $C_1$ and $s$. Every $C_i$ is then central and the algebra $A_W$ is commutative. Hence all the above relations hold in $A_W$, implying that we have a surjective map from the algebra defined by the above presentation to $A_W$. 

Note that $A_W$ has a $\mathbb{Z}\bigl[v, v^{-1}\bigr]$-basis consisting of the $\langle \mathcal{O}\bigl( s^{[i,i+j]} \bigr)[j]\rangle$, for $0\leq i \leq d-1$ and $0\leq j \leq d-2$, and $\langle \mathcal{O}(W)[d-1]\rangle$. Therefore, in order to complete the proof, it suffices to show that the above defined commutative algebra is $\mathbb{Z}\bigl[v, v^{-1}\bigr]$--linearly spanned by the $s^i C_j$ for $(i,j) \in \Sigma $ where 
$$\Sigma= \{0, \dots, d-1\} \times \{0, \dots, d-2\} \cup \{0\}\times\{d-1\}.$$
As noted above, every $C_i$ is a polynomial in $C_1$ and $s$. Consequently, we only need to show that the former statement holds for monomials of the form $s^kC_1^l$, which we do inductively. Since it is trivially satisfied for $s$ and $C_1$, we just need to check that the $\mathbb{Z}[v, v^{-1}]$--span $\langle s^i C_j~|~(i,j) \in \Sigma \rangle$ is stable under multiplication by $s$ and $C_1$. For $s$, this follows from the first and last relations, while for $C_1$, this follows from first, third and fourth relations.
\end{proof}

\begin{remark}\label{remA1}
Remember that we supposed from the beginning that $d>2$. Indeed, in the case where $d=2$, the category $\mathcal{B}_W$ as we defined it above would be Soergel's category of type $A_1$, whose Grothendieck ring has rank $2$. But in that case $\mathcal{O}(s)$ does not appear as a direct summand of a tensor power of $\mathcal{O}\bigl(s^{\leq 1}\bigr)$. Nonetheless, if we consider the monoidal category generated by both $\mathcal{O}\bigl(s^{\leq 1}\bigr)$ and $\mathcal{O}(s)$, we get a Grothendieck ring of rank $3$ with the same presentation as the one given in Proposition~\ref{pres1}. This could be an indication that, in the Coxeter case, the above category should be thought of as the category generated by Soergel bimodules and the bimodules $\mathcal{O}(x)$, $x\in W$. In type $A_2$, this category was investigated by the authors in~\cite{GT} and gives rise to a Grothendieck ring of rank $25$.  
\end{remark}

The presentation obtained in Proposition \ref{pres1} can be reduced to a presentation with only two generators:

\begin{prop}[Second presentation of the Grothendieck ring]\label{pres2}
The algebra $A_W$ is generated by $s$ and $C$ with relations
$$\begin{cases}
s^d=1,\\
sC = Cs,\\
sC_{d-1}=C_{d-1},\\
C C_{d-1}=\bigl(v+v^{-1}\bigr) C_{d-1},
\end{cases}$$
where $C_{d-1}:=\sum_{i=0}^{\lfloor\frac{d-1}{2}\rfloor} {{d-1-i}\choose{i}} \bigl(-s\bigr)^i C^{d-1-2i}$.
\end{prop}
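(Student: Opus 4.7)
The plan is to deduce this second presentation from the first one by showing that the $C_i$ for $i\geq 2$ can be systematically eliminated in favour of polynomials in $s$ and $C := C_1$. Let $A'_W$ denote the $\mathbb{Z}[v, v^{-1}]$--algebra defined by the new presentation. I will construct an isomorphism $A'_W \xrightarrow{\sim} A_W$ sending $s \mapsto s$ and $C \mapsto C_1$, so that the element $C_{d-1}$ in $A'_W$ maps to the generator $C_{d-1}$ of the first presentation.

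The first step is to define, inductively in $A'_W$, elements $C_i$ for $2 \leq i \leq d-1$ by the Fibonacci-style recurrence $C_{i+1} := C C_i - s C_{i-1}$, with $C_0 := 1$ and $C_1 := C$. A standard generating-function argument (or a direct induction on $i$) yields the closed formula
$$C_i = \sum_{k=0}^{\lfloor i/2 \rfloor}\binom{i-k}{k}(-s)^k C^{i-2k},$$
so that the element denoted $C_{d-1}$ in the statement coincides with this recursive definition. Using only $sC=Cs$ and an induction on $i$, the relations $sC_i = C_i s$ follow; the commutativity $C_iC_j = C_jC_i$ is then automatic as each $C_i$ lies in the commutative subalgebra $\mathbb{Z}[v,v^{-1}][s,C]$. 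This shows that all relations of Proposition~\ref{pres1} hold in $A'_W$: the first is among the relations of $A'_W$, the commutation relations are just verified, the recurrence $C_1 C_i = C_{i+1} + sC_{i-1}$ for $1 \leq i \leq d-2$ is the definition of the $C_i$'s, and the last two relations of Proposition~\ref{pres1} are directly among the defining relations of $A'_W$. Hence there is a well-defined surjective $\mathbb{Z}[v,v^{-1}]$--algebra homomorphism $\pi : A'_W \twoheadrightarrow A_W$.

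To complete the proof, I will show that $A'_W$ is spanned over $\mathbb{Z}[v, v^{-1}]$ by the $d(d-1)+1$ elements $\{s^i C_j \mid (i,j)\in\Sigma\}$ with $\Sigma$ as in the proof of Proposition~\ref{pres1}; since $\pi$ sends this set bijectively to a basis of $A_W$, it must then be an isomorphism. Since $s$ and $C$ commute and $s^d=1$, the algebra $A'_W$ is spanned by monomials $s^a C^b$ with $0\leq a \leq d-1$ and $b\geq 0$. Inverting the closed formula above (a standard Chebyshev identity) rewrites each $C^b$ as a $\mathbb{Z}[s]$--linear combination of $C_0, C_1, \dots, C_b$, so it remains only to reduce $s^a C_j$ when $j \geq d-1$. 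The relation $sC_{d-1}=C_{d-1}$ gives $s^a C_{d-1} = C_{d-1}$ for all $a$, handling $j=d-1$. For $j\geq d$, one expands $C_{j+1} = CC_j - sC_{j-1}$; the relation $CC_{d-1} = (v+v^{-1})C_{d-1}$ ensures that each new occurrence of $CC_{d-1}$ is replaced by a multiple of $C_{d-1}$, and then $sC_{d-1} = C_{d-1}$ absorbs any power of $s$ that gets attached to $C_{d-1}$, so a straightforward induction on $j$ reduces every $C_j$ to the claimed spanning set.

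The main obstacle is the bookkeeping in this last reduction step: one has to check that the iterative substitution using the two "boundary" relations $sC_{d-1}=C_{d-1}$ and $CC_{d-1}=(v+v^{-1})C_{d-1}$ really terminates with terms of the required form $s^i C_j$ for $(i,j)\in\Sigma$, without producing, for instance, stray monomials $s^i C_{d-1}$ with $i\neq 0$. However, the fact that $\pi$ is a surjection onto a free module of rank $d(d-1)+1$ makes this a rank comparison: as soon as one exhibits a spanning set of size at most $d(d-1)+1$ for $A'_W$, the surjection $\pi$ is forced to be an isomorphism, and the spanning argument above provides exactly such a set.
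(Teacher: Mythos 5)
Your approach is essentially the same as the paper's in its core idea: establish the closed Chebyshev-type formula for $C_k$ as a polynomial in $s$ and $C:=C_1$ (which the paper also proves by induction), then use it to eliminate $C_2,\dots,C_{d-1}$. Where the paper packages this elimination as a Tietze transformation and then argues which relations become redundant, you instead construct a homomorphism between $A'_W$ and $A_W$ explicitly and close the argument with a rank comparison. This is a more hands-on and self-contained route than invoking Tietze transformations, and the spanning argument you sketch for $A'_W$ — reducing $s^a C_j$ for $j\ge d-1$ via the two ``boundary'' relations $sC_{d-1}=C_{d-1}$ and $CC_{d-1}=(v+v^{-1})C_{d-1}$, then comparing with the known rank of $A_W$ — is correct in substance.

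There is, however, one spot where the logic as written is reversed. You verify that all relations of Proposition~\ref{pres1} hold in $A'_W$ (with the $C_i$ defined recursively there) and conclude ``hence there is a well-defined surjective $\ldots$ homomorphism $\pi : A'_W \twoheadrightarrow A_W$.'' Verifying the relations of the \emph{first} presentation inside $A'_W$ produces a homomorphism in the opposite direction, $A_W \to A'_W$, not $A'_W \to A_W$. What is actually needed for $\pi$ to be well defined is the dual check: the four relations of the \emph{new} presentation must hold in $A_W$ under $s\mapsto s$, $C\mapsto C_1$. The first two are among the relations of Proposition~\ref{pres1}; for the last two, one uses the closed formula (which, via the third relation of Proposition~\ref{pres1} and the same induction you perform in $A'_W$, also holds in $A_W$) to identify the polynomial expression for $C_{d-1}$ with the class of $\mathcal{O}(W)[d-1]$, and then the last two relations of Proposition~\ref{pres1} give exactly what is required. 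So the ingredient is already present in your proof; it is a slip of direction rather than a missing idea. Once corrected, the rest goes through: a surjection $\pi:A'_W\to A_W$ onto a free $\mathbb{Z}[v,v^{-1}]$--module of rank $d(d-1)+1$, together with a spanning set of that same size in $A'_W$, forces the spanning set to be a basis mapping to a basis, hence $\pi$ is an isomorphism.
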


\begin{proof}
For $1\leq k \leq d-1$, recall that we denote by $C_k$ the class of $ \mathcal{O}\bigl(s^{\leq k}\bigr)[k]$ and by $s$ the class of $\mathcal{O}(s)$ in the Grothendieck ring. Let $C:=C_1$. Note that the proof will in particular show that $C_{d-1}$, which was previously defined as the class of $\mathcal{O}(W)[d-1]$, can be expressed in terms of $s$ and $C$ as claimed above. More generally, we claim that
$$C_k=\sum_{i=0}^{\lfloor\frac{k}{2}\rfloor} {{k-i}\choose{i}} (-s)^i C^{k-2i},$$
for all $1\leq k \leq d-1$. For $k=1$, the formula trivially holds and for $k=2$, Proposition~\ref{decomp} implies that $C_2=C^2- s$. Assume that the claim holds for all $j\leq k-1$. Applying again Proposition~\ref{decomp}, we get by induction that

\begin{eqnarray*}
C_k & = & C C_{k-1} - s C_{k-2}\\
& = & \sum_{i=0}^{\lfloor\frac{k-1}{2}\rfloor} {{k-1-i}\choose{i}} (-s)^i C^{k-2i}+\sum_{i=0}^{\lfloor\frac{k-2}{2}\rfloor} {{k-2-i}\choose{i}} (-s)^{i+1} C^{k-2(i+1)}\\
& = & \sum_{i=0}^{\lfloor\frac{k-1}{2}\rfloor} {{k-1-i}\choose{i}} (-s)^i C^{k-2i}+\sum_{i=1}^{\lfloor\frac{k}{2}\rfloor} {{k-1-i}\choose{i-1}} (-s)^{i} C^{k-2i}.\\
\end{eqnarray*}
Using Pascal's rule ${{k-1-i}\choose{i}}+{{k-1-i}\choose{i-1}}={{k-i}\choose{i}}$, we can rewrite this equality as follows: if $k$ is odd then $\lfloor\frac{k-1}{2}\rfloor = \lfloor\frac{k}{2}\rfloor$ and we get
\begin{eqnarray*}
C_k = C^k+\Biggl(\sum_{i=1}^{\lfloor\frac{k}{2}\rfloor} {{k-i}\choose{i}} (-s)^i C^{k-2i}\Biggr) = \sum_{i=0}^{\lfloor\frac{k}{2}\rfloor} {{k-i}\choose{i}} (-s)^i C^{k-2i},
\end{eqnarray*} 
which concludes in that case; if $k$ is even then $\lfloor\frac{k-1}{2}\rfloor = \lfloor\frac{k}{2}\rfloor-1$ but since, for $i=\frac{k}{2}$, we have ${{k-1-i}\choose{i-1}}=1={{k-i}\choose{i}}$, we also get the claim. 

As a consequence, we can apply Tietze transformations to the presentation obtained in Proposition~\ref{pres1} to obtain an equivalent presentation of $A_W$. First we can remove from the one of Proposition~\ref{pres1} all generators except $s$ and $C_1$, but each removed generator has to be replaced in the relations with its equivalent word in $s$ and $C_1$. Then we want to remove some of the relations which can be derived from the others. Notice that, among the second set of relations, all are a consequence of the commutativity of $s$ and $C_1$, and so is the third relation. Hence they can be removed. The other ones cannot, and yield the last two relations of the presentation above.

\end{proof}

\begin{remark}\label{remBoRo} As an immediate corollary, we have that the Grothendieck ring $\text{Gr}^{\text{st}_{\text{B}}}(D(B))$ of the category $D(B)-\text{stab}_{\text{B}}$ studied in \cite[Section 5B]{BoRo} (which is a quotient of the stable category of the module category of the Drinfeld quantum double of the Taft algebra $B$) is isomorphic to a quotient of $A_W$. More precisely, we have the following isomorphism of $\mathbb{Z}[v, v^{-1}]$--algebras:
$$ \mathbb{Z}[v, v^{-1}] \otimes_{\Z} \text{Gr}^{\text{st}_{\text{B}}}(D(B)) \cong A_W / I $$
where $I$ is the ideal generated by $C_{d-1}$ and $C_{l-1} + s^{l} C_{d-l-1}$ for all $1 \leq l \leq d-1$. In the notation of~\cite{BoRo}, this isomorphism sends $\text{\bf v}_{\zeta}^{\text{st}_{\text{B}}}$ to $s$ and $\text{ \bf m}_{2}^{\text{st}_{\text{B}}}$ to $C_1$.
\end{remark}

\begin{coro}\label{cor:ext}
Let $T_s$ be defined as $C=T_s+v$. The quotient algebra $A_w/(s=1)$ is generated by a single element $T_s$ and a single relation of the form 
$$T_s^d + a_{d-1} T_s^{d-1} + \dots + a_1 T_s + a_0=0$$
for some $a_i\in\mathbb{Z}[v, v^{-1}]$. In particular, after localization by $a_0$ if $a_0\notin\mathbb{Z}[v, v^{-1}]^\times$, we get the (generic) Hecke algebra (with one parameter). 
\end{coro}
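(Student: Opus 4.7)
The plan is to take the presentation of $A_W$ given in Proposition~\ref{pres2} and impose the extra relation $s = 1$. Under this substitution the relations $s^d = 1$, $sC = Cs$ and $sC_{d-1} = C_{d-1}$ become trivial, and the explicit formula for $C_{d-1}$ from Proposition~\ref{pres2} collapses to a monic polynomial of degree $d-1$ in the single variable $C$ (a rescaled Chebyshev polynomial of the second kind in the variable $C$). Consequently $A_W/(s=1)$ is generated as a $\mathbb{Z}[v,v^{-1}]$-algebra by the single element $C$.

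The only remaining non-trivial relation is $C \cdot C_{d-1} = (v+v^{-1}) C_{d-1}$, which upon substituting the polynomial expression for $C_{d-1}$ becomes the single polynomial identity
$$\bigl(C - (v+v^{-1})\bigr) \, C_{d-1}(C) = 0,$$
monic of degree $d$ in $C$. Performing the change of variable $T_s = C - v$ produces the claimed monic relation $T_s^d + a_{d-1} T_s^{d-1} + \cdots + a_1 T_s + a_0 = 0$ with $a_i \in \mathbb{Z}[v,v^{-1}]$. To confirm this is a \emph{complete} presentation I would exhibit an explicit isomorphism
$$A_W/(s=1) \;\cong\; \mathbb{Z}[v,v^{-1}][C]\big/\bigl((C-(v+v^{-1}))\,C_{d-1}(C)\bigr)$$
by checking that the obvious maps in both directions (sending $s \mapsto 1,\ C_1 \mapsto C$ one way, and $C \mapsto \overline{C_1}$ the other way) respect all the relations of Proposition~\ref{pres1}, and are mutually inverse on generators. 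As a byproduct this shows that $A_W/(s=1)$ is a free $\mathbb{Z}[v,v^{-1}]$-module of rank $d$, matching the degree of the relation.

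Finally, for the identification with the generic one-parameter Hecke algebra of the cyclic group $W = \mathbb{Z}/d\mathbb{Z}$ in the sense of \cite{BMR}: by definition such an algebra is a free $\mathbb{Z}[v,v^{-1}]$-module of rank $d$ presented by one generator satisfying a monic degree-$d$ polynomial relation with invertible constant term (so that the generator is invertible). Our quotient matches this description after localizing by $a_0$ to make the constant term a unit. The step requiring most care is checking that $a_0$ is a nonzero element of $\mathbb{Z}[v,v^{-1}]$ (otherwise the localization is meaningless): a direct calculation gives $a_0 = -v^{-1} C_{d-1}(v)$, which is nonzero because $C_{d-1}(C)$, viewed as a polynomial in $C$ with integer coefficients, does not have the indeterminate $v$ as a root. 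The main obstacle is precisely this well-definedness/isomorphism verification; once granted, the rank count combined with the Chebyshev recursion inherited from Proposition~\ref{pres2} finishes the proof.
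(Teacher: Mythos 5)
Your argument is correct and follows the paper's own proof: set $s=1$ in the presentation of Proposition~\ref{pres2}, observe that only $CC_{d-1}=(v+v^{-1})C_{d-1}$ survives, substitute the explicit degree-$(d-1)$ polynomial for $C_{d-1}$, and change variables via $T_s=C-v$. Your additional remark that $a_0=-v^{-1}C_{d-1}(v)\neq 0$ is a worthwhile detail the paper leaves implicit, while the ``completeness of the presentation'' check you propose is automatic, since adjoining a relation to a presented algebra always yields a presentation of the quotient.
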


\begin{proof}
Setting $s=1$ in the presentation from Proposition~\ref{pres2} of $A_W$, all relations except the last one become trivial. Using the fact that, in the quotient algebra, $C_{d-1}=\sum_{i=0}^{\lfloor\frac{d-1}{2}\rfloor} {{d-1-i}\choose{i}}(-1)^i ( T_s + v)^{d-1-2i}$, this last relation $C C_{d-1}= (v+v^{-1}) C_{d-1}$ becomes
$$(T_s - v^{-1})\left(\sum_{i=0}^{\lfloor\frac{d-1}{2}\rfloor} {{d-1-i}\choose{i}} (-1)^i( T_s + v)^{d-1-2i}\right) = 0.$$
This is a polynomial relation of order $d$ in $T_s$, where the coefficient of $T_s^d$ is invertible. Hence, after localization by its constant coefficient, we recover the defining relation (with coefficient parameters) of the generic Hecke algebra (see \cite[Section 2.1]{Ma}) specialized to one parameter. In Section~\ref{ss} we will see how to factor this polynomial over $\C$ using Chebyshev polynomials.   
\end{proof}

\section{Semisimplicity}\label{ss}

In this section, we show that the algebra $A_W^{\mathbb{C}}$ defined by the same presentation as the one in Proposition~\ref{pres1} but over the complex numbers, with $v\in\mathbb{C}^\times$, is generically semisimple. Define polynomials $Q_i(X,Y)$ recursively by $Q_0=1$, $Q_1=X$ and
$$Q_{i+1}= X Q_i - Y Q_{i-1}.$$ 
It then follows from Proposition \ref{pres1} that $C_i= Q_i(C_1, s)$. Let $\eta\in\mathbb{C}^{\times}$ be such that $\eta^d=1$. The polynomials in $X$ given by $Q_i(X,\eta)$ will turn out to play an important role in this section. Set $U_i(X):=\sqrt{\eta}^{-i} Q_i(2\sqrt{\eta}X,\eta)$. We then get $U_0=1$, $U_1=2X$ and it follows from the recursive relation on the $Q_i$ that 
$$U_{i+1}= 2X U_i - U_{i-1}.$$
That is, the polynomials $U_i$ are Chebyshev polynomials of the second kind and, as such, $U_i$ has $i$ distinct roots, given by $\cos\bigl(\frac{k\pi}{i+1}\bigr)$ for $k=1, \dots, i$. As a consequence, the roots of $Q_i(X, \eta)$ are given by $2\sqrt{\eta} \cos\bigl(\frac{k\pi}{i+1}\bigr)$ for $k=1, \dots, i$. In particular, we have that

\begin{lemma}\label{lem:no_mult}
The polynomial $Q_i(X, \eta)\in\mathbb{C}[X]$ has no multiple root.
\end{lemma}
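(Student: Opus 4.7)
The plan is to extract the statement directly from the explicit root description sitting just above the lemma. Since the paragraph preceding the lemma already identifies the $U_i$ with Chebyshev polynomials of the second kind, the only thing left to do is to translate their well-known root structure back through the change of variables $X \mapsto X/(2\sqrt{\eta})$.

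More precisely, I would proceed as follows. First I would recall that Chebyshev polynomials of the second kind, defined by $U_0 = 1$, $U_1 = 2X$ and $U_{i+1} = 2X U_i - U_{i-1}$, have as roots the $i$ distinct real numbers $\cos\bigl(\tfrac{k\pi}{i+1}\bigr)$ for $k=1, \dots, i$; this distinctness is just the injectivity of $\cos$ on the open interval $(0,\pi)$, to which all these arguments belong. Since $Q_i(X,\eta)$ has degree $i$ in $X$ (an immediate induction on the recursion), and since $Q_i(X,\eta) = \sqrt{\eta}^{\,i}\, U_i\bigl( \tfrac{X}{2\sqrt{\eta}} \bigr)$ is a nonzero rescaling of $U_i\bigl(\tfrac{X}{2\sqrt{\eta}}\bigr)$, the roots of $Q_i(X,\eta)$ are exactly the $i$ complex numbers $2\sqrt{\eta}\cos\bigl(\tfrac{k\pi}{i+1}\bigr)$, $k=1,\dots,i$. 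Multiplying a list of distinct real numbers by the nonzero complex scalar $2\sqrt{\eta}$ preserves distinctness, so $Q_i(X,\eta)$ has $i$ pairwise distinct roots and therefore no multiple root.

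There is essentially no obstacle here; the content of the lemma is absorbed into the already-established identification of the $U_i$ as Chebyshev polynomials of the second kind. The only point that deserves a sentence of care is the rescaling argument, namely that multiplying by $2\sqrt{\eta} \in \mathbb{C}^\times$ is injective, so that distinctness of the cosine values transfers to distinctness of the roots of $Q_i(X,\eta)$. No choice of square root of $\eta$ matters, since passing from one square root to its negative only permutes the roots within the list.
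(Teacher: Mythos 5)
Your proposal is correct and matches the paper's argument essentially verbatim: the paper also proves the lemma by identifying $U_i(X) = \sqrt{\eta}^{-i} Q_i(2\sqrt{\eta}X,\eta)$ as a Chebyshev polynomial of the second kind, reading off the $i$ distinct roots $\cos\bigl(\tfrac{k\pi}{i+1}\bigr)$, and rescaling by $2\sqrt{\eta}$ to conclude that $Q_i(X,\eta)$ has $i$ distinct roots. Your extra remark that the choice of square root only permutes the list of roots is a sensible clarification but does not change the argument.
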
 

We first need to show that the $\mathbb{C}$-algebra $A_W^\mathbb{C}$ keeps some of the properties of $A_W$ which we observed: 

\begin{prop}
We have $\dim(A_W^\mathbb{C})=d (d-1) +1$. A basis is given by the $s^i C_j$, with $(i,j)\in\Sigma$. 
\end{prop}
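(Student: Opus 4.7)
The plan is to combine the spanning argument from the proof of Proposition~\ref{pres1} (which produces the upper bound on the dimension) with a specialization argument from the integral form $A_W$ (which produces the lower bound).

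First, I would observe that the defining relations of $A_W^{\mathbb{C}}$ are formally identical to those of $A_W$, with $v$ specialized to a chosen element $v_0 \in \mathbb{C}^\times$. Hence the inductive argument given in the proof of Proposition~\ref{pres1} — which only uses the presentation and not the specific nature of the base ring — applies verbatim to show that the $\mathbb{C}$-span of $\bigl\{s^i C_j \mid (i,j)\in\Sigma\bigr\}$ is stable under multiplication by both $s$ and $C_1$, and hence equals the whole algebra $A_W^{\mathbb{C}}$. This gives the upper bound $\dim_{\mathbb{C}}(A_W^{\mathbb{C}}) \leq d(d-1)+1$.

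Second, for the matching lower bound, I would invoke the fact recalled at the start of Section~\ref{ss} (itself an immediate consequence of Theorem~\ref{classif}): the Grothendieck ring $A_W$ is a \emph{free} $\mathbb{Z}[v,v^{-1}]$-module of rank $d(d-1)+1$, with basis the classes $\bigl\langle \mathcal{O}\bigl(s^{[i,i+j]}\bigr)[j]\bigr\rangle$ for $(i,j) \in \Sigma$, i.e.\ precisely $\{s^i C_j\}_{(i,j)\in\Sigma}$ under the identifications made in Proposition~\ref{pres1}. Since $A_W^{\mathbb{C}}$ is defined by the very same generators and relations but over $\mathbb{C}$, one has a canonical isomorphism of $\mathbb{C}$-algebras $A_W^{\mathbb{C}} \cong A_W \otimes_{\mathbb{Z}[v,v^{-1}]} \mathbb{C}$, where $\mathbb{C}$ is viewed as a $\mathbb{Z}[v,v^{-1}]$-algebra via the evaluation $v \mapsto v_0$. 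Extension of scalars of a free module preserves rank, so $\dim_{\mathbb{C}}(A_W^{\mathbb{C}}) = d(d-1)+1$ and the images of the $s^i C_j$ are $\mathbb{C}$-linearly independent.

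Combining the two inequalities forces the spanning set $\{s^iC_j\}_{(i,j)\in\Sigma}$ to be a basis, proving the claim. The only point requiring genuine (but routine) verification is the identification $A_W^{\mathbb{C}} \cong A_W \otimes_{\mathbb{Z}[v,v^{-1}]} \mathbb{C}$, which is a standard consequence of the fact that both sides are presented by the same generators modulo the same relations after specialization of $v$; no substantial obstacle is expected.
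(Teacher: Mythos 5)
Your proof is correct, and it takes a genuinely different route from the paper. The paper proves linear independence of $\{s^iC_j\}_{(i,j)\in\Sigma}$ by constructing an explicit $A_W^{\mathbb{C}}$-module $U$ with basis $\{E_{i,j}\}_{(i,j)\in\Sigma}$, verifying by hand that the defining relations (in the two-generator form of Proposition~\ref{pres2}) hold on $U$, and then observing that $s^iC_j \cdot E_{0,0} = E_{i,j}$, which forces linear independence. You instead use a base-change argument: the presented $\mathbb{Z}[v,v^{-1}]$-algebra underlying $A_W$ is free of rank $d(d-1)+1$ with basis $\{s^iC_j\}_{(i,j)\in\Sigma}$ (as established at the start of Section~5 from Theorem~\ref{classif} and in the proof of Proposition~\ref{pres1}), and since presentations commute with base change — by right-exactness of $-\otimes_{\mathbb{Z}[v,v^{-1}]}\mathbb{C}$ applied to the exact sequence defining the presented algebra — one gets $A_W^{\mathbb{C}} \cong A_W \otimes_{\mathbb{Z}[v,v^{-1}]}\mathbb{C}$, which immediately preserves the free rank and the basis. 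Your approach is more conceptual and shorter; it avoids the verification that $U$ is indeed an $A_W^{\mathbb{C}}$-module, which occupies most of the paper's proof. What the paper's construction buys is a concrete representation of $A_W^{\mathbb{C}}$ (essentially the regular representation in disguise, since $E_{i,j} = s^iC_j\cdot E_{0,0}$), which makes the structure tangible but is not reused elsewhere. The one point you flag as "routine," namely $A_W^{\mathbb{C}}\cong A_W\otimes_{\mathbb{Z}[v,v^{-1}]}\mathbb{C}$, is indeed a standard consequence of the universal property of presentations and needs no flatness hypothesis, so your proof stands. Also note that once you have the base-change isomorphism, the spanning argument in your first paragraph becomes redundant (the images of a basis under base change already span), though stating it does no harm; and the fact that $A_W$ is a free $\mathbb{Z}[v,v^{-1}]$-module of rank $d(d-1)+1$ appears at the start of Section~5, not Section~\ref{ss}.
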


\begin{proof}
It is clear from the relations in Proposition~\ref{pres1} that the $s^i C_j$ with $(i,j)\in\Sigma$ linearly span $A_W^{\mathbb{C}}$. Arguing as in the proof of Proposition~\ref{pres2}, we see that the presentations from Propositions~\ref{pres1} and~\ref{pres2} are still equivalent. Keeping the same notation as before, we have $C_j=Q_j(C,s)$ for all $0\leq j\leq d-1$. 

To show that the above elements are linearly independent, we begin by constructing an action of $A_W^{\mathbb{C}}$ on the vector space $U:=\bigoplus_{(i,j)\in\Sigma} \mathbb{C} E_{i,j}$. For simplicity we write $E_{d-1}:=E_{0,d-1}$. We let $C$ act on the basis elements by 
\begin{multline*}
C E_{i,j}= E_{i,j+1} + E_{i+1,j-1}\text{  if  }1\leq j \leq d-2, \\
C E_{i,0}=E_{i,1} \text{  and  } C E_{d-1}=(v+v^{-1}) E_{d-1}
\end{multline*}
and we let $s$ act by $$s E_{i,j}=E_{i+1,j}\text{ if }0\leq j \leq d-2,~s E_{d-1}=E_{d-1},$$
where the indices $i$ in $E_{i,j}$ must be read modulo $d$.
We have to check that this defines an action of $A_W^\mathbb{C}$. To this end, we show that the linear transformations of $U$ defined by these actions satisfy the relations given in Proposition~\ref{pres2}. It is clear that the action of $s$ commutes with the action of $C=C_1$; in particular, the element $C_{d-1}$ as defined in Proposition~\ref{pres2} and $Q_{d-1}(C,s)$ (which was defined inductively) must act by the same linear transformation on $U$ (and more generally, the same holds for $C_j$ and $Q_j(C,s)$). It is also clear that the action of $s$ defines an automorphism of order $d$ of $U$. Hence it remains to show that the actions of $C_{d-1}$ and of $s C_{d-1}$ coincide, and that the actions of $C C_{d-1}$ and $(v+v^{-1}) C_{d-1}$ also coincide. We claim that for all $1\leq j\leq d-1$,  
\begin{equation}\label{eq:E00}
C_j E_{0,0}=E_{0,j}.
\end{equation}
If $j=1$ the claim holds by definition of the action of $C$ since, in that case, we have $Q_j(C,s)=C$. For $j\geq 1$, we get by induction that 
\begin{eqnarray*}
C_{j+1} E_{0,0} &=& C C_j E_{0,0} - s C_{j-1} E_{0,0}= C E_{0,j} - s E_{0, j-1}\\
&=& E_{0,j+1} + E_{1,j-1} - E_{1, j-1}=E_{0,j+1}.
\end{eqnarray*}
We now consider the action of the polynomial in $C$ and $s$ given by $C_{d-1}=Q_{d-1}(C,s)$. We claim that $C_{d-1} E_{i,j}= [v]_j E_{d-1},$ where $[v]_j=v^{-j} + v^{-j+2} +\dots + v^j$. We show it by induction on $j$. If $j=0$ then, using~\eqref{eq:E00}, we get 
$$C_{d-1} E_{i,0}=C_{d-1} s^i E_{0,0}=s^i C_{d-1} E_{0,0}= s^i E_{d-1}= E_{d-1}.$$
Similarly for $j=1$, we get that 
$$C_{d-1} E_{i,1}=C_{d-1} C E_{i,0}=C C_{d-1} E_{i,0}= C E_{d-1}=(v+v^{-1}) E_{d-1}=[v]_1 E_{d-1}.$$ 
Now let $j\geq 1$. We have
\begin{eqnarray*}
C_{d-1} E_{i,j+1} &=& C_{d-1} ( C E_{i,j}- E_{i+1,j-1})= C C_{d-1} E_{i,j}-[v]_{j-1} E_{d-1} \\
&=& ( (v+v^{-1}) [v]_j - [v]_{j-1}) E_{d-1}=[v]_{j+1} E_{d-1},
\end{eqnarray*}   
which shows the claim. Now, for all $(i,j)\in\Sigma$, we have that $C_{d-1} E_{i,j}=[v]_j E_{d-1}$, which implies that 
$$s C_{d-1} E_{i,j}= s [v]_j E_{d-1} = [v]_j E_{d-1} = C_{d-1} E_{i,j}$$
and 
$$C C_{d-1} E_{i,j}= C [v]_j E_{d-1} = [v]_j C E_{d-1} = (v+v^{-1}) [v]_j E_{d-1}=(v+v^{-1}) C_{d-1} E_{i,j}.$$
This shows that the last two relations of Proposition~\ref{pres2} are satisfied, hence $U$ is an $A_W^{\mathbb{C}}$-module. Now let $\sum_{(i,j)\in \Sigma} \alpha_{(i,j)} s^i C_j=0,$
where $\alpha_{(i,j)}\in\mathbb{C}$. We have by~\eqref{eq:E00} that $$0= \biggl(\sum_{(i,j)\in \Sigma} \alpha_{(i,j)} s^i C_j\biggr)\cdot E_{0,0} =\sum_{(i,j)\in\Sigma} \alpha_{(i,j)} E_{i,j},$$
which implies, since $\{E_{i,j}\}_{(i,j)\in\Sigma}$ forms a basis of $U$, that $\alpha_{(i,j)}=0$ for all $(i,j)\in\Sigma$ and therefore that $\{s^i C_j\}_{(i,j)\in\Sigma}$ forms a basis of $A_W^{\mathbb{C}}$. 
\end{proof}

To show that $A_W^{\mathbb{C}}$ is generically semisimple, we need to show that the regular module $_{A_W^{\mathbb{C}}}A_W^{\mathbb{C}}$ decomposes as a direct sum of simple $A_W^{\mathbb{C}}$-submodules. Since the algebra is commutative, every simple module is one-dimensional. The reflection $s$ has to act on any one-dimensional submodule by multiplication by a scalar $\eta\in \mathbb{C}$ such that $\eta^d=1$. Let $S(\eta):=1+\eta^{-1} s+\dots+\eta^{-d+1} s^{d-1}\in A_W^{\mathbb{C}}$ and consider, for $i=0, \dots, d-2$, the element $D_i^{\eta}:=S(\eta) C_{i}$ (recall that $C_0 = 1$). We set $D_{d-1}^1=C_{d-1}$ (if $\eta \neq 1$, note that $S(\eta)C_{d-1} = 0$). It is clear from the defining relations of $A_W^{\mathbb{C}}$ that the set $\left\{D_i^{\eta} ~|~i=1, \dots, e_{\eta}\right\}$ (with $e_{\eta}= d$ if $\eta = 1$ and $d-1$ otherwise) forms a basis of the $\eta$--eigenspace $E_s^{\eta}$ of $s$.

We first treat the case where $\eta=1$. The $1$--eigenspace $E_s^1$ of $s$ is given by 
$\mathbb{C} D_0^{1} \oplus  \dots \oplus \mathbb{C} D_{d-2}^{1} \oplus \mathbb{C} D_{d-1}^1,$
in particular we have $\dim (E_s^1)=d$. It is also clear from the relations in $A_W^{\mathbb{C}}$ that $C_1$ preserves this eigenspace, hence, that $E_s^1$ is an $A_W^{\mathbb{C}}$-submodule. We claim that, if $v+v^{-1}$ is not a root of the polynomial $Q_{d-1}(X,1)$, then $C_1$ has $d$ distinct eigenvalues on $E_s^1$, implying that $E_s^1$ is a direct sum of one-dimensional eigenspaces for $C_1$. This implies that $E_s^1$ is a direct sum of one-dimensional $A_W^{\mathbb{C}}$--submodules. Assume that 
$$C_1 \bigl(  a_0 D_0^1 + a_1 D_1^1 + \dots + a_{d-1} D_{d-1}^1\bigr)=\lambda \bigl(a_0 D_0^1 + a_1 D_1^1 + \dots + a_{d-1} D_{d-1}^1\bigr)$$ 
for some $\lambda, a_i\in\mathbb{C}$. The relations in $A_W^{\mathbb{C}}$ imply that 
\begin{multline*}
C_1 D_i^1= D_{i+1}^1 + D_{i-1}^1 \text{ if $1 \leq i \leq d-3$, } C_1 D_0^1=D_1^1, \\  C_1 D_{d-2}^1= d D_{d-1}^1 + D_{d-3}^1 \text{ and } C_1 D_{d-1}^1=\bigl(v+v^{-1}\bigr) D_{d-1}^1.
\end{multline*}
Hence the above equation can be rewritten as the following system
$$\begin{cases}
a_1= \lambda a_0\\
a_0+ a_2= \lambda a_1\\
\dots\\
a_{d-4}+a_{d-2}=\lambda a_{d-3}\\
a_{d-3}=\lambda a_{d-2}\\
d a_{d-2} + \bigl(v+v^{-1}\bigr) a_{d-1} = \lambda a_{d-1}.
\end{cases}$$   
The vector $(a_0, a_1, \dots, a_{d-1})$ is an eigenvector with eigenvalue $\lambda$ for the action of $C_1$ if and only if the above system of linear equations has infinitely many solutions, that is, if and only if the determinant of the matrix
$$M=\begin{bmatrix}
-\lambda & 1 & 0 & & & \cdots & 0\\
1 & -\lambda & 1 & 0 & & \cdots & 0\\
0 & 1 & -\lambda & 1 & 0 & \cdots & 0\\
\vdots& \cdots & & \ddots & & \cdots & \vdots\\
0 &\cdots & 0 & 1 & -\lambda & 1 & 0\\
0 &\cdots & &0 & 1 & -\lambda & 0\\
0 & \cdots & & &0  & d & -\lambda + \bigl(v+v^{-1}\bigr)
\end{bmatrix}$$
is equal to zero. For $1\leq i\leq d-1$, we denote by $M_{i}$ the matrix obtained by removing the last $d-i$ rows and columns of the matrix $M$. Observe that $\det(M)=\bigl(-\lambda+\bigl(v+v^{-1}\bigr) \bigr) \det(M_{d-1})$. Setting $R_0(\lambda)=1$ and $R_i(\lambda)=(-1)^i\det(M_i)$ and noticing that 
$$\det(M_i)= - \lambda \det( M_{i-1}) - \det( M_{i-2}),$$ 
we get $R_{i+1}(\lambda)= \lambda R_i(\lambda) - R_{i-1}(\lambda)$, while $R_1(\lambda)=\lambda$ and $R_0(\lambda)=1$. It follows that the polynomials $R_i$ satisfy the same inductive relation as the $Q_i(X,1)$. Hence $R_i(\lambda) = Q_i(\lambda,1)$ and it possesses $i$ distinct roots equal to $2\cos\bigl(\frac{k\pi}{i+1}\bigr)$ for $k=1, \dots, i$. This means that $\det(M) = 0$ if and only if $\lambda \in \left\{v+v^{-1}, 2\cos\bigl(\frac{k\pi}{d}\bigr) ~|~k=1, \dots, d-1 \right\}$. In particular, if, for all $k=1, \dots, d-1$, $v+v^{-1}\neq 2\cos\bigl(\frac{k\pi}{d}\bigr)$, then $C_1$ has $d$ distinct eigenvalues on $E_s^1$. This concludes in that case.

We now consider the case where $\eta$ is such that $\eta^d=1$, $\eta\neq 1$. The $\eta$-eigenspace $E_s^\eta$ of $s$ is then given by 
$\mathbb{C} D_0^{\eta} \oplus  \dots \oplus \mathbb{C} D_{d-2}^{\eta},$
in particular we have $\dim (E_s^\eta)=d-1$. It is also clear from the relations defining $A_W^{\mathbb{C}}$ that $C_1$ preserves $E_s^\eta$, hence that $E_s^\eta$ is an $A_W^{\mathbb{C}}$-submodule. We have
\begin{multline*}
C_1 D_i^{\eta}=D_{i+1}^{\eta}+ \eta D_{i-1}^{\eta} \text{ for all $i=1, \dots, d-3$,} \\
C_1 D_0^{\eta}=D_1^{\eta} \text{ and } C_1 D_{d-2}^{\eta}= \eta D_{d-3}^{\eta}.
\end{multline*}
Assume that
$$C_1 \bigl( a_0 D_0^{\eta} + a_1 D_1^{\eta} + \dots + a_{d-2} D_{d-2}^{\eta} \bigr)=\lambda \bigl(a_0 D_0^{\eta} + a_1 D_1^{\eta} + \dots + a_{d-2} D_{d-2}^{\eta} \bigr)$$
for some $\lambda, a_i\in\mathbb{C}$. This means that
$$\begin{cases}
\eta a_1= \lambda a_0\\
a_0+ \eta a_2= \lambda a_1\\
\dots\\
a_{d-4}+\eta a_{d-2}=\lambda a_{d-3}\\
a_{d-3}=\lambda a_{d-2}.\\
\end{cases}$$
As in the previous case, the vector $(a_0, a_1, \dots, a_{d-2})$ is an eigenvector with eigenvalue $\lambda$ for the action of $C_1$ if and only if the above system of linear equations has infinitely many solutions, that is, if and only if the determinant of the matrix
$$M=\begin{bmatrix}
-\lambda & \eta & 0 & & & \cdots & 0 \\
1 & -\lambda & \eta & 0 & & \cdots & 0\\
0 & 1 & -\lambda & \eta & 0 & \cdots & 0 \\
\vdots & \cdots & & \ddots & & \cdots & \vdots\\
0 & \cdots & 0 & 1 & -\lambda & \eta & 0\\
0 & \cdots & & 0 & 1 & -\lambda & \eta\\
0 & \cdots & & & 0 & 1 & -\lambda
\end{bmatrix}$$
is equal to zero. For $1\leq i\leq d-1$, we denote by $M_{i}$ the matrix obtained by removing the last $d-1-i$ rows and columns of the above matrix $M$. Note that here $M_{d-1}= M$.
Setting $R_0(\lambda)=1$ and $R_i(\lambda)=(-1)^i\det(M_i)$ and noticing that 
$$\det(M_i)= - \lambda \det( M_{i-1}) - \eta \det( M_{i-2}),$$ 
we get $R_{i+1}(\lambda)= \lambda R_i(\lambda) - \eta R_{i-1}(\lambda)$, while $R_1(\lambda)=\lambda$ and $R_0(\lambda)=1$. It follows that the polynomials $R_i$ satisfy the same inductive relation as the $Q_i(X,\eta)$. Hence $R_i(\lambda) = Q_i(\lambda,\eta)$ and it possesses $i$ distinct roots equal to $2\cos\bigl(\frac{k\pi}{i+1}\bigr)$ for $k=1, \dots, i$. This means that $\det(M) = \det(M_{d-1}) =0 $ if and only if $\lambda \in \left\{2\cos\bigl(\frac{k\pi}{d}\bigr) ~|~k=1, \dots, d-1 \right\}$ and hence that $C_1$ has $d-1$ distinct eigenvalues on $E_s^{\eta}$.

All the eigenspaces $\left\{E_s^{\eta}~|~\eta \in\C \text{ with }\eta^d=1\right\}$ are in direct sum and the sum of their dimensions is equal to $d+ (d-1)(d-1)=\dim A_W^{\C}$. Moreover, for $v$ generic, every such eigenspace splits as a direct sum of one-dimensional $C_1$--invariant subspaces (hence one-dimensional $A_W^{\C}$--submodules as $s$ and $C_1$ generate the algebra $A_W^{\C}$). We then get the following: 

\begin{thm}[Semisimplicity]\label{thm:semi}
Assume that $v+v^{-1}\neq 2\cos\bigl(\frac{k\pi}{d}\bigr)$ for all $k=1, \dots, d-1$. Then $A_W^{\C}$ is semisimple. 
\end{thm}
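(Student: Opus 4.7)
The plan is to exploit commutativity of $A_W^{\C}$ together with the explicit action of the generators $s$ and $C_1$ on eigenspaces computed in the preceding calculation. Since $A_W^{\C}$ is generated by $s$ and $C_1$ and these commute, the algebra is commutative, so showing semisimplicity is equivalent to showing that the regular module $_{A_W^{\C}}A_W^{\C}$ decomposes as a direct sum of one-dimensional submodules, which in turn reduces to finding a basis of simultaneous eigenvectors for the multiplication operators $s$ and $C_1$.

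The first step is to use the relation $s^d=1$ to decompose the regular module as $\bigoplus_{\eta^d=1} E_s^{\eta}$, where $E_s^{\eta}$ is the $\eta$-eigenspace of (left multiplication by) $s$. The preceding discussion provides explicit bases $\{D_i^{\eta}\}$ for these eigenspaces, giving $\dim E_s^1 = d$ and $\dim E_s^{\eta} = d-1$ for $\eta\neq 1$. Since $C_1$ commutes with $s$, each $E_s^{\eta}$ is stable under $C_1$, and the total dimension count $d+(d-1)^2 = d(d-1)+1$ matches $\dim A_W^{\C}$, so it suffices to diagonalize $C_1$ on each eigenspace.

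The second step is to show that $C_1$ acts on each $E_s^{\eta}$ with distinct eigenvalues. Using the explicit formulas for $C_1 D_i^{\eta}$ given by the defining relations of $A_W^{\C}$, the characteristic polynomial of $C_1$ on $E_s^{\eta}$ is computed as a tridiagonal determinant. Expanding the determinant along its last row and using induction on the size yields the three-term recursion $R_{i+1}(\lambda) = \lambda R_i(\lambda) - \eta R_{i-1}(\lambda)$ with $R_0 = 1$, $R_1 = \lambda$, which matches the recursion defining the polynomials $Q_i(X,\eta)$. By Lemma \ref{lem:no_mult}, $Q_i(\lambda,\eta)$ has $i$ distinct roots $2\sqrt{\eta}\cos\bigl(\tfrac{k\pi}{i+1}\bigr)$. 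For $\eta\neq 1$, this directly gives $d-1$ distinct eigenvalues for $C_1$ on $E_s^{\eta}$. For $\eta=1$, the extra row coming from the basis element $D_{d-1}^1=C_{d-1}$ contributes the additional factor $(-\lambda + (v+v^{-1}))$; the hypothesis $v+v^{-1}\neq 2\cos(k\pi/d)$ precisely ensures that this extra eigenvalue is distinct from the $d-1$ roots of $Q_{d-1}(X,1)$, giving $d$ distinct eigenvalues on $E_s^1$.

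Combining the two steps, each $E_s^{\eta}$ splits as a direct sum of one-dimensional $C_1$-eigenspaces, hence (since $s$ and $C_1$ generate $A_W^{\C}$) as a direct sum of simple $A_W^{\C}$-submodules, proving semisimplicity. The main technical obstacle is the eigenvalue analysis of $C_1$ on $E_s^1$: the presence of $C_{d-1}$ as an extra basis vector breaks the clean tridiagonal pattern and introduces the exceptional eigenvalue $v+v^{-1}$, which is exactly what makes the Chebyshev condition $v+v^{-1}\neq 2\cos(k\pi/d)$ appear and which makes this hypothesis necessary as well as sufficient.
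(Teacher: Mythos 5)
Your proof is correct and follows the paper's argument essentially verbatim: decompose the regular module into $s$-eigenspaces $E_s^{\eta}$ using the explicit bases $\bigl\{D_i^{\eta}\bigr\}$, note each is $C_1$-stable, compute the action of $C_1$ via the tridiagonal (Chebyshev) determinant recursion to get $d-1$ distinct eigenvalues on each $E_s^{\eta}$ ($\eta\neq 1$), and on $E_s^1$ the extra basis vector $D_{d-1}^1=C_{d-1}$ contributes the factor $\bigl(-\lambda+\bigl(v+v^{-1}\bigr)\bigr)$, with the hypothesis guaranteeing this new eigenvalue is distinct from the roots of $Q_{d-1}(X,1)$. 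Your closing remark that the hypothesis is also \emph{necessary} is not established by the argument (a repeated eigenvalue alone does not preclude diagonalizability of $C_1$ on $E_s^1$), but since the theorem asserts only sufficiency this does not affect the proof.
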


\end{document}